\def\Dy#1{\Frac{\partial #1}{\partial y}}
\def\Dy_1y_1#1{\Frac{\partial^2 #1}{\partial y_1^2}}
\newtheorem{Theorem}{Theorem}[part]
\newtheorem{Definition}{Definition}[part]
\newtheorem{Proposition}{Proposition}[part]
\newtheorem{Assumption}{Assumption}[part]
\newtheorem{Lemma}{Lemma}[part]
\newtheorem{Remark}{Remark}[part]
\makeatletter \@addtoreset{equation}{section}
\def \Int{\displaystyle\int}
\def \Frac{\displaystyle\frac}
\def \Sup{\displaystyle\sup}
\def \be{\begin{eqnarray}}
\def \ee{\end{eqnarray}}
\def \b*{\begin{eqnarray*}}
\def \e*{\end{eqnarray*}}
\def \E{\mathbb{E}}
\def \L{\mathbb{L}}
\def \N{\mathbb{N}}
\def \P{\mathbb{P}}
\def \R{\mathbb{R}}
\def \W{\overleftarrow{W}}
\def \B{\overleftarrow{B}}
\def \[{[\,\!\![}
\def \]{]\,\!\!]}
\def \1{{\bf 1}}
\def \ep{\hbox{ }\hfill$\Box$}
\def\Fc{{\cal F}}
\def\Hc{{\cal H}}
\def\Nc{{\cal N}}
\def\Sc{{\cal S}}
\begin{document}

\begin{frontmatter}

\title{ Systems of Reflected  Quasilinear Stochastic PDEs in a Convex Domain}
\runtitle{Quasilinear SPDEs}

  \author{\fnms{Wissal} \snm{Sabbagh}\corref{}\ead[label=e2]{wissal.sabbagh@univ-evry.fr}}
    \thankstext{t2}{The research of the first author benefited from the support of the «Chair Markets in Transition», under the aegis of «Louis Bachelier Finance and Sustainable  Growth» laboratory, a joint initiative of \'Ecole polytechnique, Université d'\'Evry Val d'Essonne and Fédération Bancaire Française.
 }
\address{ LaMME, University of  Evry - Paris-Saclay, \\
91037, Evry, France.
 \\ \printead{e2}}
 \affiliation{University Evry}

\author{\fnms{Tusheng} \snm{Zhang}\corref{}\ead[label=e3]{tusheng.zhang@manchester.ac.uk}}
\address{ School of Mathematics, University of Manchester, \\
Oxford Road, Manchester M13 9PL, England, UK \\
 \printead{e3}}
 \affiliation{University of Manchester}


\runauthor{Sabbagh and Zhang }


\begin{abstract}
In this paper, we obtain existence and uniqueness of solutions of systems of reflected quasilinear stochastic partial differential equations in a convex domain  $D$. The method is based on the probabilistic interpretation of the solution through the backward doubly stochastic differential equations. The solution is expressed as a pair $(u,\nu)$ where $u$ is a $\bar{D}$-valued, predictable, $L^2(\R^d)$-continuous process which belongs to a proper Sobolev space and $\nu$ is a vector-valued random signed  measure which  prevents the solution from leaving the domain $D$.
\end{abstract}
\begin{keyword}
\kwd{Stochastic partial differential equation, Reflected backward doubly stochastic differential equation}
\kwd{Reflected stochastic partial differential equations}
\kwd{Convex domains}
\kwd{Stochastic flow }

\end{keyword}
\begin{keyword}[class=AMS]
\kwd[Primary ]{60H15}
\kwd{60G46}
\kwd[; secondary ]{35H60}
\end{keyword}

\end{frontmatter}

\section{Introduction}

Stochastic partial differential equations (SPDEs) are a powerful tool to model various phenomena from biology, engineering to finance. They can be used, for example, to describe the evolution of action potentials in the brain, or to model interest rates. They appeared also in phase transitions and front propagation in random media, in filtering and stochastic control with partial observations, in pathwise stochastic control theory and mathematical finance, etc.\\

It is well known now that backward stochastic differential equations (BDSDEs in short) give a probabilistic interpretation for the solution of a class of semi-linear PDEs.
By introducing in  standard BSDEs a second nonlinear term driven by an external noise, we obtain Backward Doubly SDEs (BDSDEs in short) \cite{PP} (see also \cite{BM}, \cite{MS1}), which give rise to a  representation of the solutions of SPDEs and provide a powerful tool for probabilistic  numerical  schemes \cite{BLMM}  for such SPDEs. Several generalizations to investigate more general nonlinear SPDEs have been developed following different approaches of the notion of weak solutions, namely,  Sobolev's solutions  \cite{DS, GR, PR}. \\

Given a convex domain $D$ in $\R^k$, we are concerned with the study of weak solutions of systems of  reflected quasilinear SPDEs in the domain $D$. We consider a class of PDEs but perturbed by a nonlinear noise driven by a finite -dimensional Brownian motion. We are looking for the solutions of the reflection problem for systems of  quasilinear SPDEs. The solution will be  a pair $(u, \nu)$, where $\nu$ is a vector-valued random signed measure, $ u \in \mathbf { L } ^ 2 \big ( \Omega \times [0 , T ] ; H ^ 1 ( \mathbb R ^ d)^{\otimes k} \big )$ satisfy the following relations:
\begin{equation}
\begin{small}
\label{SPDE1}
 \left\lbrace
\begin{aligned}
& (i) \; \;    u(t,x) \in \bar D , \quad d\mathbb{P}\otimes dt\otimes dx - \mbox{a.e.},  \\
& (ii)\;\; du (t,x) + \Big(\partial_i \big[ a_{ij}(t,x)\partial_j  u(t,x)+g_i(t,x,u (t,x),\nabla u(t,x)) \big] +f(t,x,u(t,x),\nabla u(t,x))\Big)dt\\
&\qquad +  h(t,x,u(t,x),\nabla u(t,x))\cdot d\W_t  = -\nu(dt,dx), a.s. \\
& (iii)\; \; \nu(u\notin \partial  D)=0 , a.s.,\\
 & (iv)  \; \;  u(T,x) = \Phi(x), \quad dx-\mbox{a.e.}.
\end{aligned}
\right.
\end{small}
\end{equation}
where  $a$ is a time-dependent symmetric uniformly elliptic measurable matrix, $f,$
$h$, $g$ are  non-linear measurable functions and Lipschitz in $(y,z)$.
The differential term with $d\W_t$
refers to the backward stochastic integral with respect to a $l$-dimensional Brownian motion on
$\big(\Omega, \mathcal{F},\mathbb{P}, (W_t)_{t\geq 0} \big)$.
\vskip 0.3cm
Real-valued reflected SPDEs driven by space-time white noise was studied in \cite{DP1}, \cite{NP}, \cite{XZ}. Systems of reflected SPDEs in a convex domain were considered in \cite{Z-1}. However, in these literature the gradient of the solution did not appear in the equation.
\vskip 0.3cm
In the one dimensional case, Matoussi and Stoica \cite{MS10} have proved an existence and uniqueness result for the obstacle problem of quasilinear stochastic PDE.
The method is based on the probabilistic interpretation of the solution by using the backward doubly stochastic differential equation (BDSDE in short). They have also proved that the solution is a pair $(u,\nu)$ where $u$ is a predictable continuous process which takes values in a proper Sobolev space and $\nu$ is a random regular measure satisfying the minimal Skohorod condition. In particular, they gave for the regular measure $\nu$ a probabilistic interpretation in terms of the continuous increasing process $K$ where $(Y, Z, K)$ is the solution of a reflected generalized BDSDE. Their method relies on the comparison theorem for SPDEs which is very much one dimensional.
\vskip 0.3cm
The system of reflected semilinear (the case $g=0$)  SPDEs was studied in \cite{MSZ}. Our approach is similar to that in \cite{MSZ}. However, additional difficulties arise when adding  the divergence term $div(g(t,u(t,x), \nabla u(t,x)))$ to the equation.
An essential ingredient to deal with this quasilinear part is the probabilistic representation of the divergence term obtained in \cite{S} in terms of the forward and backward stochastic integrals.
\vskip 0.3cm
We will use the penalization method. To prove the convergence of the solutions of the approximating equations, we appeal to the associated backward doubly stochastic differential equations.
 Indeed, a probabilistic method based on reflected BDSDEs and stochastic flow technics are investigated in our context (see e.g \cite{BM,BCKF} for more details in these technics). The key element  is to use the inversion of stochastic  flow which transforms the variational formulation of the SPDEs to the associated BDSDEs. Thus it plays the same role as It\^o's formula in the case of the classical solution of SPDEs.
We need also to establish a number of   a priori estimates using  an  extension of Ito's formula for the solution of the system of generalized BDSDEs involving forward and backward stochastic integrals.
\vskip 0.3cm
The paper is organized as follows:
We introduce in Section \ref{section:SPDE} several notations and hypothesis that will be used throughout the paper. Then, a weak formulation for the system of quasilinear SPDEs is given in Definition \ref{o-pde}. The main results of this paper are presented  in Section \ref{existenceSPDE:section}. Indeed, the existence and uniqueness result of the weak solution for quasilinear RSPDEs are established by using a penalization method.  A probabilistic representation of this solution is proven via the solution of generalized Markovian RBDSDEs.
In the Appendix, technical lemmas for the existence of the solution of the reflected BDSDEs are given.
\section{Weak solution of quasilinear SPDE in a convex domain}
\label{section:SPDE}
The euclidean norm of a vector $x\in\R^k$ will be denoted by $|x|$, and for a $k\times k$ matrix $A$, we define $\|A\|=\sqrt{Tr AA^\top}$. In what folllows let us fix a positive number $T>0$.\\
Let $(\Omega, \Fc,\P)$ be a probability product space, and let $\{W_s, 0\leq s\leq T\}$ and $\{B_s, 0\leq s\leq T\}$ be two mutually independent standard Brownian motion processes, with values respectively in $\R^l$ and in $\R^d$.
For each $t\in[0,T]$, we define
$$\Fc_t:=\Fc_t^B\vee\Fc_{t,T}^W\vee \Nc$$
where $\Fc_t^B=\sigma\{B_r, 0\leq r\leq t\}$, $\Fc_{t,T}^W=\sigma\{W_r-W_t, t\leq r\leq T\}$ and $\Nc$ the class of $\P$ null sets of $\Fc$.
Note that the collection $\{\Fc_t, t\in[0,T]\}$ is neither increasing nor decreasing, and it does not constitute a filtration.
\subsection{Transformation of the equation}
We note that we can reduce the study of our problem \eqref{SPDE1} using the transformation given in Matoussi and Stoica \cite{MS10} (Remark 1, p. 1157). Indeed, we denote by $L=\sum_{i,j} \partial_i a_{ij}\partial_j$ the elliptic operator such that
$$\lambda |\xi|^2\leq \sum_{i,j} a_{ij}(x) \xi^i\xi^j\leq\Lambda |\xi|^2.$$
then the time change $t\rightarrow \Frac{1}{2 \Lambda} t^\prime$ yields to one correspondence between the solutions $u$ of the equation
\begin{align}
du (t,x) & + \big[ L u(t,x)+ div (g(t,x,u (t,x),\nabla u(t,x))) +f(t,x,u(t,x),\nabla u(t,x))\big]dt \nonumber \\
 &+ h(t,x,u(t,x),\nabla u(t,x))\cdot d\W_t  = 0\, ,
\end{align}
over $[0,T]$ and the solutions $\widehat{u}(t,.)=u(\Frac{1}{2 \Lambda} t,.)$ satisfying the equation
\begin{align}
d\widehat{u}(t,x) & + \big[\Frac{1}{2}\Delta \widehat{u}(t,x) + div (\widehat{g}(t,x,\widehat{u} (t,x),\nabla \widehat{u}(t,x))) +\widehat{f}(t,x,\widehat{u}(t,x),\nabla \widehat{u}(t,x))\big]dt\nonumber \\
 &+ \widehat{h}(t,x,\widehat{u}(t,x),\nabla \widehat{u}(t,x))\cdot d\overleftarrow{\widehat{W}}_t  = 0\, ,
\end{align}
over the interval $[0,2 \Lambda T]$, with the transformed coefficients
$$\widehat{f}(t,x,y,z):=\frac{1}{2\Lambda}f\left(\frac{1}{2\Lambda}t,x,y,z\right)\quad,\quad \widehat{h}(t,x,y,z):=\frac{1}{(2\Lambda)^{1/2}}h\left(\frac{1}{2\Lambda}t,x,y,z\right)$$
$$ \widehat{g}(t, x,y,z):=\frac{1}{2\Lambda}\left( g( \frac{1}{2\Lambda}t,x,y,z) +\gamma \left( x\right) z\right)\quad, \quad \gamma=\Lambda I - a.$$

Therefore, from now on, we focus our study on solving a system of reflected quasilinear stochastic PDEs of the form:
\begin{align}\label{2.1}
du(t,x) & + \big[\Frac{1}{2}\Delta u(t,x) + div (g(t,x,u (t,x),\nabla u(t,x))) +f(t,x,u(t,x),\nabla u(t,x))\big]dt\nonumber \\
 &+ h(t,x,u(t,x),\nabla u(t,x))\cdot d\W_t  = 0.
\end{align}

\vspace{0.5cm}
\noindent  Our main interest is the study of  weak solutions  to the reflection problem for  multidimensional SPDEs  in a  convex domain $ D$ in $ \R^k$. We consider the  solution of system of  refected quasilinear SPDEs \eqref{SPDE1}  as a  pair $ (u, \nu)$, where $ \nu$ is a vector-valued random signed  measure and $ u \in \mathbf{L}^2 \big(\Omega \times [0,T]; H^1 (\mathbb R^d)^{\otimes k}\big)$ satisfies the following relations:
\small
\begin{equation}
\label{RSPDE1}
 \left\lbrace
\begin{aligned}
& (i) \; \;    u(t,x) \in \bar D , \quad d\mathbb{P}\otimes dt\otimes dx - \mbox{a.e.},  \\
& (ii)\;\; du(t,x) + [\frac{1}{2}\Delta u(t,x) +f(t,x,u(t,x),\nabla u(t,x))+div(g(t,x,u(t,x),\nabla u(t,x))) ]dt\\
&\qquad +  h(t,x,u(t,x),\nabla u(t,x))\cdot d\W_t  = -\nu(dt,dx), a.s. \\
& (iii)\; \; \nu(u\notin \partial  D)=0 , a.s.,\\
 & (iv)  \; \;  u(T,x) = \Phi(x), \quad dx-\mbox{a.e.}.
\end{aligned}
\right.
\end{equation}
The random measure $\nu$  acts only when  the process $u$ reaches the boundary of  the domain $ D$.
 The rigorous  sense of the relation $(iii)$ will be based on the  probabilistic representation of the measure $\nu$  in terms of the
  bounded variation process $K$, a component of the associated  solution of the reflected  BDSDE in the domain $D$.
\subsection{Notations and Hypothesis}
Let us first introduce some functional spaces:\\
- $C^n_{l,b}(\R^p,\R^q)$ the set of $C^n$-functions which grow at most linearly at infinity and whose partial derivatives of order less than or equal to $n$ are bounded.\\
- $\mathbf{L}^2\left( \mathbb{{R}}^d\rightarrow \mathbb{{R}}^k \right) $  is the usual $L^2$ space with the inner product,
$$ \left( u,v\right)=\Int_{\mathbb{R}^d}<u\left( x\right), v\left(
x\right)>dx,\;\left\| u\right\| _2=\left(
\Int_{\mathbb{R}^d}|u|^2\left( x\right) dx\right) ^{\frac
12}. $$
Here $<, >$ stands for the scalar product in Euclidean spaces.
\vspace{0.2cm}
Our evolution problem will be considered over a fixed time interval
$[0,T]$ and the norm for an element of $\mathbf{L}^2\left(
[0,T] \times \mathbb{{R}}^d\rightarrow \mathbb{{R}}^k\right) $ will be denoted by
$$\left\| u\right\| _{2,2}=\left(\Int_0^T  \Int_{\mathbb{R}^d} |u (t,x)|^2 dx dt \right)^{\frac 12}. $$
We will write  $\mathbf{L}^2\left( [0,T] \times
\mathbb{{R}}^d\right)$ for $\mathbf{L}^2\left(
[0,T] \times \mathbb{{R}}^d\rightarrow \mathbb{{R}}\right)$.
We introduce the following hypotheses :
\begin{Assumption}\label{assxi}
$\Phi:\R^d\rightarrow\R^k$ belongs to $\mathbf{L}^4(\R^d)$ and $\Phi(x)\in\bar{D}~~ a.e. ~\forall x\in\R^d$;
\end{Assumption}
\begin{Assumption} \label{assgener}
\begin{itemize}
\item[\rm{(i)}] $f:[0,T]\times \R^d\times \R^k\times \R^{k\times
d}\rightarrow\R^k$, $h:[0,T]\times \R^d\times \R^k\times \R^{k\times
d}\rightarrow\R^{k\times l}$ and $g:[0,T]\times\R^d\times\R^k\times\R^{k\times d}\rightarrow\R^{k\times d}$  are measurable in $(t,x,y,z)$ and satisfy:
\begin{itemize}
\item $|f(t,x,y,z)|\leq  f^0(t,x).$
\item $|h(t,x,y,z)|\leq  h^0(t,x).$
\item  $|g(t,x,y,z)|\leq  g^0(t,x).$
\end{itemize}
where $ f^0, h^0 \text{and}\,\,   g^0$ are bounded and belong to $ \mathbf{L}^2\left( [0,T] \times
\mathbb{{R}}^d\right). $
 \item [\rm{(ii)}] There exist constants $c>0$,\,$0<\alpha<1$ and $0<\beta<1$ such that for any $(t,x)\in[0,T]\times\R^d~;~(y_1,z_1),(y_2,z_2)\in\R^k\times\R^{k\times d}$
\b*
|f(t,x,y_1,z_1)-f(t,x,y_2,z_2)| &\leq & c\big(|y_1-y_2|+\|z_1-z_2\|\big)\\
\|h(t,x,y_1,z_1)-h(t,x,y_2,z_2)\| &\leq & c|y_1-y_2|+\beta \|z_1-z_2\|\\
\|g(t,x,y_1,z_1)-g(t,x,y_2,z_2)\| &\leq & c|y_1-y_2|+\alpha \|z_1-z_2\|.
\e*
\item [\rm{(iii)}]The contract property: $\alpha+\Frac{\beta^2}{2}<\Frac{1}{2}$.
\end{itemize}
\end{Assumption}
To avoid technical complications, through the paper we assume $D$ is a regular  (i.e. a convex domain with class $C^2$ boundary) and $0\in D$.
\vskip 0.3cm
Since the domain $D$ is convex we need to  recall some properties that we will use later.  Let $\partial D$ denote the boundary of $D$ and  $\pi(x)$ the  orthogonal projection of $x\in\R^k$ on the closure $\bar{D}$. We have the following properties:
\be
(x'-x)^\top(x-\pi(x))\leq 0,  ~~ \forall x\in\R^d, ~ \forall x'\in \bar{D}\label{prop1}
\ee
\be
(x'-x)^\top(x-\pi(x))\leq (x'-\pi(x'))^\top(x-\pi(x)),  ~~ \forall x, x'\in\R^k\label{prop2}
\ee
\be
\exists \gamma > 0,\, \mbox{such that}\,\, x^\top (x-\pi(x))\geq \gamma|x-\pi(x)|, ~~ \forall x\in\R^k. \label{prop3}
\ee

\noindent One can find all these results in Menaldi \cite{M}, page 737.
%

\subsection{The measures $\P^m$}
The operator $ \partial_t + \frac{1}{2} \Delta $, which represents the main linear part in the equation \eqref{RSPDE1}, is associated with the Bownian motion in $\mathbb{R}^d$.   The sample space of the Brownian motion is $ \Omega' = \mathcal{C }\left([0, \infty ); \mathbb{R}^d \right)$, the canonical process $(B_t)_{t \geq 0}$ is defined by $ B_t (\omega) = \omega (t)$, for any $ \omega \in \Omega'$, $t \geq 0$ and  the shift operator, $ \theta_t  \, : \,  \Omega' \longrightarrow  \Omega'$, is defined by $ \theta_t (\omega) (s) = \omega (t+s)$, for any $s \geq 0$ and $ t \geq 0$. The canonical filtration $ \mathcal{F}_t^B = \sigma \left( B_s; s \leq t \right)$ is completed by the standard procedure with respect to the probability measures produced by the transition function
$$
P_t (x, dy) = q_t (x-y) dy, \quad t >0, \quad x \in \mathbb{R}^d,
$$
where $ q_t (x) = \left(2\pi t\right)^{- \frac{d}{2}} \exp \left( - |x|^2/2t \right)$ is the Gaussian density. Thus we get a continuous Hunt process $\left(\Omega', B_t, \theta_t, \mathcal{F}, \mathcal{F}^B_t, \mathbb{P}^x \right)$. We shall also use the backward filtration of the future events $ \mathcal{F}'_t = \sigma \left(B_s; \; \,  s \geq t \right)$ for $t\geq 0$. $\mathbb{P}^0$ is the Wiener measure, which is supported by the set $ \Omega'_0 = \{ \omega \in \Omega', \; \, \omega(0) =0 \}$. We also set $ \Pi_0 (\omega) (t) = \omega (t) - \omega (0),\,  t \geq 0$, which defines a map $ \Pi_0 \, : \, \Omega' \rightarrow \Omega'_0$. Then  $\Pi = (B_0, \Pi_0 ) \, : \,  \Omega' \rightarrow \mathbb{R}^d \times \Omega'_0$ is a bijection. For each probability measure on $\mathbb{R}^d$, the probability $\mathbb{P}^{\mu}$ of the Brownian motion started with the initial distribution $\mu$ is given by $$ \mathbb{P}^{\mu} = \Pi^{-1} \left(\mu \otimes \mathbb{P}^0 \right).$$
In particular, for the Lebesgue measure in $\mathbb{R}^d$, which we denote by $ m = dx$, we have
$$ \mathbb{P}^{m} = \Pi^{-1} \left(dx\otimes \mathbb{P}^0 \right).$$
We recall that  $\{B_{t,s}(x), t\leq s\leq T\}$ is the diffusion process starting from $x$ at time $t$ and is given by
 \begin{equation}\label{sde}
  B_{t,s}(x)=x+(B_s-B_t).
\end{equation}
Moreover the inverse of the flow satisfies the
following backward SDE
\begin{equation}\label{inverse:flow}
\begin{split}
B_{t,s}^{-1}(y) &  = y   - (B_s-B_t) .
\end{split}
\end{equation}
for any  $t<s$.
\vskip 0.3cm
Notice that $\{B_{t-r}, \mathcal{F}'_{t-r}, r\in [0, t]\}$ is a backward local martingale under $\P^m$.
Let $L(\cdot,\cdot): [0, \infty )\times \R^d \rightarrow \R^d$ be a measurable function such that $L\in \mathbf{L}^2\left(
[0,T] \times \mathbb{{R}}^d\rightarrow \mathbb{{R}}^d\right) $ for any $T>0$. Finally we recall the forward and backward stochastic integral defined in \cite{MS10} under the measure $\P^m$.
$$\Int_s^t L(r,B_{r})\ast dB_r=\Int_s^t <L(r,B_{r}), dB_r>+\Int_s^t <L(r,B_{r}), d\overleftarrow{B}_r>. $$
When $L$ is smooth, one has
\begin{equation}\label{forward-backward}
\Int_s^t L(r,B_{r})\ast dB_r=-2\Int_s^t div(L(r,\cdot))(B_r)dr.
\end{equation}
We refer the reader to \cite{MS10} for more details.
\subsection{Weak formulation for a solution of stochastic  PDEs}
\label{definition:solution}
The space of test functions which we employ in the definition of
weak solutions of the evolution equations  \eqref{SPDE1} is $
\mathcal{D}_T  = \left [\mathcal{C}^{\infty} (\left[0,T]\right) \otimes
\mathcal{C}_c^{\infty} \left(\mathbb{R}^d\right)\right ]^{\otimes k}$, where
\begin{itemize}
\item $\mathcal{C}^{\infty} \left([0,T]\right)$ denotes the space of real
functions which can be extended as infinitely differentiable functions
in the neighborhood of $[0,T]$,
\item $\mathcal{C}_c^{\infty}\left(\mathbb{R}^d\right)$ is the space of
infinitely differentiable functions with compact supports in
$\mathbb{R}^d$.
\end{itemize}
Another space that we use is the first order Sobolev space $ H^1 (\R^d)=H^1_0 (\R^d)$. Its natural scalar product and norm are
$$(u,v)_{H^1 (\R^d)}= (u,v)+(\nabla u,\nabla v), \quad \|u\|_{H^1 (\R^d)}= \left(\|u\|_2^2+\|\nabla u\|_2^2\right)^{1/2},$$
where $\nabla$ stands for the gradient. Here, the derivative is defined in the weak sense (Sobolev sense).\\
We denote by  $ {\mathcal H}_T\subset \mathbf{L}^2 ([0,T]; H ^1 (\R^d)^{\otimes k} )$, $\P$-a.s., the space of  $ \Fc_{t,T}^W$-progressively measurable  processes  $(u_t ) $ such that $t\mapsto u_t=u(t,.)$ is continuous in $\mathbf{L}^2(\R^d\rightarrow \R^k)$ endowed with the norm
$$\begin{array}{ll}
\|u\|_{{\mathcal H}_T}^2=
 \E \,  \Big[\underset{ 0 \leq s \leq T}{\Sup} \|u_s \|_2^2 +   \Int_{ \mathbb{R}^d} \Int_0^T  |\nabla
u_s (x)|^2 ds dx \Big].
\end{array}
$$
\begin{Definition}[{\textbf{Weak solution of quasilinear SPDE without reflection}}]
We say that $ u \in \mathcal{H}_T $ is a weak solution of SPDE  $(
\ref{2.1}) $ if the following
relation holds, for each $\varphi \in \mathcal{D}_T ,$
\begin{align}\label{wspde1}
\nonumber &\Int_{t}^{T}\!\!\Int_{\mathbb{R}^{d}}\!\big[\langle u(s,x), \partial _{s}\varphi(s,x)\rangle+\Frac{1}{2}\langle \nabla u(s,x), \nabla \varphi(s,x)\rangle\big]dxds+\Int_{\mathbb{R}^{d}}\!\!\big[\langle u(t,x ), \varphi (t,x
)\rangle-\langle \Phi(x ), \varphi (T,x)\rangle\big]dx\\
\nonumber &=\Int_{t}^{T}\!\!\Int_{\mathbb{R}^{d}}\!\!\big[\langle (s,x ,u(s,x),\nabla u(s,x)), \varphi(s,x)\rangle-\langle g(s,x ,u(s,x),\nabla u(s,x)), \nabla\varphi(s,x)\rangle\big]dxds\\
&+\Int_{t}^{T}\!\!\Int_{\mathbb{R}^{d}}\!\!\langle h(s,x ,u(s,x),\nabla u(s,x)), \varphi(s,x) \rangle dxd\W_s.
\end{align}
We denote by $ u:=\mathcal{U }(\Phi, f,g,h)$ the solution of SPDEs with data $(\Phi,f,g,h)$.
\end{Definition}
\vspace{0.2cm}
\noindent The existence and uniqueness  of weak solution  for SPDEs \eqref{wspde1} is ensured by Theorem 8 in Denis and Stoica \cite{DS}.\vspace{0.3cm}
\noindent\\
\newpage
We  now precise  the definition  of weak solutions for the reflected quasilinear SPDE (\ref{RSPDE1}):
\begin{Definition}[{\textbf{Weak solution of quasilinear RSPDE}}]
\label{o-pde}We say that $(u,\nu ):= (u^i,\nu^i )_{1\leq i\leq k}$ is a weak solution of the reflected SPDE (\ref{RSPDE1}) associated to $(\Phi,f,g,h)$, if for each $1\leq i\leq k$
\begin{itemize}
\item[(i)]$\left\| u\right\|_{{\mathcal H}_T} <\infty $, $u_t(x)\in \bar{D}, dx\otimes dt\otimes d\P~a.e.$, and  $u(T,x)=\Phi(x)$.
\item[(ii)] $\nu^i $ is a signed \textit{random measure} on $[0,T]\times\R^d$ such that:
\begin{itemize}
\item[a)] $\nu^i $ is adapted in the sense that for any measurable function $\psi:[0,T]\times \R^d\longrightarrow\R$ and for each $s\in[t,T]$,$\Int_{s}^{T}\!\Int_{\mathbb{R}^{d}}\!\psi (r,x)\nu^i(dr,dx)$ is $\Fc_{s,T}^W$-measurable.
\item[b)] $\E\big[\Int_{0}^{T}\Int_{\mathbb{R}^{d}} |\nu^i| (dt,dx)\big]<\infty.$
\end{itemize}
\item[(iii)] for every $\varphi \in \mathcal D_T$%
\begin{align}\label{OPDE}
\nonumber &\Int_{t}^{T}\!\!\Int_{\mathbb{R}^{d}}\!\big[\langle u(s,x), \partial _{s}\varphi(s,x)\rangle+\Frac{1}{2}\langle\nabla u(s,x), \nabla \varphi(s,x)\rangle\big]dxds+\Int_{\mathbb{R}^{d}}\!\!\big[\langle u(t,x ), \varphi (t,x
)\rangle-\langle\Phi(x ), \varphi (T,x)\rangle\big]dx\\
\nonumber &=\Int_{t}^{T}\!\!\Int_{\mathbb{R}^{d}}\!\!\big[\langle f(s,x ,u(s,x),\nabla u(s,x)), \varphi(s,x)\rangle-\langle g(s,x ,u(s,x),\nabla u(s,x)), \nabla\varphi(s,x)\rangle\big]dxds\\
&+\Int_{t}^{T}\!\!\Int_{\mathbb{R}^{d}}\!\!\langle h(s,x ,u(s,x),\nabla u(s,x)), \varphi(s,x)\rangle dx d\W_s
+\Int_{t}^{T}\!\!\Int_{\mathbb{R}^{d}}\!\langle\varphi (s,x)1_{\{u\in \partial D\}}(s,x), \nu(ds,dx)\rangle.
\end{align}
\end{itemize}
\end{Definition}

\section{Existence and uniqueness of the system of reflected quasilinear SPDEs}
\label{existenceSPDE:section}
In this section, we will establish the existence and uniqueness result of the weak solution for quasilinear RSPDEs \eqref{RSPDE1} by using a penalization method. As a byproduct, we also obtain a probabilistic representation of this solution via the solution of generalized Markovian RBDSDEs.
The first main result of this section is the following:
\begin{Theorem}
\label{existence:RSPDE}
Let  Assumptions \ref{assxi}, \ref{assgener} hold. Then
there exists a unique weak solution $(u,\nu)$ of  the reflected SPDE (\ref{RSPDE1}) associated to $(\Phi,f,g,h)$ that satisfies
$ u (t, x) := Y_t^{t,x}$,   $dt\otimes d\P\otimes d\P^m-a.e.$,  and
\be\label{con-pre}
 Y_{s}^{t,x}=u(s,B_{t,s}(x)), \quad \quad Z_{s}^{t,x}=(\nabla u)(s,B_{t,s}(x)), \quad ds\otimes
d\P\otimes-a.e.,
\ee
where $(Y_{s}^{t,x},Z_{s}^{t,x},K_{s}^{t,x})_{t\leq s\leq T}$ is the
solution of the Markovian RBDSDE
\begin{equation}
\label{RBDSDE}
 \left\lbrace
\begin{aligned}
&(i)~ Y_{s}^{t,x}
 =\Phi(B_{t,T}(x))+
\Int_{s}^{T}f(r,B_{t,r}(x),Y_{r}^{t,x},Z_{r}^{t,x})dr+\Int_{s}^{T}h(r,B_{t,r}(x),Y_{r}^{t,x},Z_{r}^{t,x})d\W_r+K_{T}^{t,x}-K_{s}^{t,x}\\
&\hspace{2.5cm}
 +\frac{1}{2}\Int_t^T g(r,B_{t,r}(x),Y_{r}^{t,x},Z_{r}^{t,x})\ast dB_r-\Int_{s}^{T}Z_{r}^{t,x}dB_{r},\;
\P\otimes \P^m\text{-}a.e. , \; \forall \,  s \in [t,T]  \\
& (ii)~ Y_{s}^{t,x} \in \bar{D} \, \, \quad \P\otimes \P^m\text{-}a.e.\\
& (iii) \Int_0^T (Y_{s}^{t,x}-v_s(B_{t,s}(x)))^* dK_{s}^{t,x}\leq 0., \,\P\otimes \P^m\text{-}a.e., \\
&~ \text{for any continuous }\, \Fc_t -\text{random function}\, \quad v \, : \,[0,T] \times \Omega \times \mathbb R^d \longrightarrow \,  \bar{D}.
\end{aligned}
\right.
\end{equation}
Furthermore, for every measurable bounded and positive functions $\varphi $ and $\psi $,
\begin{align}
\Int_{\mathbb{R}^{d}}\Int_{t}^{T}\varphi (s,B^{-1}_{t,s}(x))\psi (s,x)1_{\{u\in \partial D\}}(s,x)\nu^i (ds,dx)=\Int_{\mathbb{R}^{d}}\Int_{t}^{T}\varphi (s,x)\psi (s,B_{t,s}(x))dK_{s}^{i}dx\text{, a.s..}
\label{con-k}
\end{align}
\end{Theorem}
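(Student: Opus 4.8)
\noindent The plan is to combine a penalization scheme at the level of the SPDE with the probabilistic representation through doubly stochastic BSDEs, using the stochastic flow $B_{t,s}(x)=x+(B_s-B_t)$ and its inverse \eqref{inverse:flow} as a substitute for It\^o's formula. For each $n\in\N$ I would first introduce the penalized (non-reflected) quasilinear SPDE obtained from \eqref{2.1} by adding the drift $n(\pi(u_n)-u_n)$, where $\pi$ is the projection onto $\bar D$; this term vanishes on $\bar D$ and pushes $u_n$ back toward the domain when it exits. By Theorem 8 of Denis and Stoica \cite{DS} each penalized equation has a unique weak solution $u_n\in\mathcal{H}_T$, and by the probabilistic representation of \cite{MS10,S} --- in particular the forward--backward representation \eqref{forward-backward} of the divergence term $\mathrm{div}(g)$ --- one has $u_n(s,B_{t,s}(x))=Y^{n,t,x}_s$ and $\nabla u_n(s,B_{t,s}(x))=Z^{n,t,x}_s$, where $(Y^n,Z^n)$ solves the penalized Markovian BDSDE obtained from \eqref{RBDSDE} upon replacing $K^{t,x}_T-K^{t,x}_s$ by $K^{n}_T-K^{n}_s=\int_s^T n(\pi(Y^{n}_r)-Y^{n}_r)\,dr$.

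\noindent The next step is to derive estimates on $(Y^n,Z^n,K^n)$ uniform in $n$. Applying the extension of It\^o's formula for BDSDEs with forward and backward integrals (the appendix lemmas) to $|Y^{n}_s|^2$, the contraction condition (iii) of Assumption \ref{assgener}, $\alpha+\frac{\beta^2}{2}<\frac12$, absorbs the quadratic contributions of the $\mathrm{div}(g)$ and $h$ terms into the $\frac12|Z^n|^2$ produced by $-\int Z^n\,dB$ and the Laplacian, yielding uniform bounds on $\E[\sup_s|Y^{n}_s|^2]$ and $\E[\int_t^T|Z^{n}_s|^2\,ds]$. The total variation of $K^n$ is controlled through \eqref{prop3}: since $x^\top(x-\pi(x))\ge\gamma|x-\pi(x)|$, the term $\int n\,(Y^{n}_r)^\top(Y^{n}_r-\pi(Y^{n}_r))\,dr$ dominates $\gamma$ times the total variation of $K^n$, and the energy estimate then bounds $\E[|K^{n}_T|^2]$ uniformly. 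These bounds transfer back to $u_n$ through the flow, giving $\sup_n\|u_n\|_{\mathcal{H}_T}<\infty$ and a uniform bound on the total variation of the approximating measures $\nu_n(dt,dx)=n(\pi(u_n)-u_n)(t,x)\,dt\,dx$.

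\noindent I would then pass to the limit. Using the uniform estimates together with the one-sided inequalities \eqref{prop1}--\eqref{prop2} for $\pi$, one shows $(Y^n,Z^n)$ is Cauchy and extracts limits $(Y,Z,K)$ with $Y^n\to Y$, $Z^n\to Z$ strongly in the natural $L^2$ spaces and $K^n\to K$, the triple $(Y,Z,K)$ solving \eqref{RBDSDE}. Since the total-variation bound forces $\E[\int_t^T|Y^{n}_r-\pi(Y^{n}_r)|\,dr]\le C/n\to0$, the state constraint $Y_s\in\bar D$ holds; the minimality condition (iii) of \eqref{RBDSDE} follows from \eqref{prop1}, because for any $\bar D$-valued continuous $v$ one has $(Y^{n}_s-v_s)^\top\,dK^{n}_s=-n(Y^{n}_s-v_s)^\top(Y^{n}_s-\pi(Y^{n}_s))\,ds\le0$, an inequality preserved in the limit. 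Setting $u(t,x):=Y^{t,x}_t$ and passing the weak formulation of the penalized SPDEs to the limit (the divergence term handled via \eqref{forward-backward}) gives $u\in\mathcal{H}_T$, $u\in\bar D$, satisfying \eqref{OPDE} for the weak limit $\nu$ of the $\nu_n$; the flow identities \eqref{con-pre} are inherited from those for $u_n$. For the measure representation \eqref{con-k} I would apply the change of variables $x\mapsto B_{t,s}(x)$, inverted by \eqref{inverse:flow}, to $\int\varphi\,\psi\,dK^{n}_s\,dx$ and pass to the limit, identifying the weak limit of $\int n(\pi(u_n)-u_n)$ against test functions with the localized measure $1_{\{u\in\partial D\}}\nu$. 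Uniqueness then follows from uniqueness for \eqref{RBDSDE}: any weak solution of \eqref{RSPDE1} yields, through the flow transformation, a solution of the reflected BDSDE, so $(u,\nu)$ is determined by $(Y,Z,K)$.

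\noindent The main obstacle I anticipate is the divergence term $\mathrm{div}(g(t,x,u,\nabla u))$ throughout. Unlike the semilinear case \cite{MSZ}, it has no classical meaning under the available regularity and must be encoded by the forward--backward integral $\frac12\int g\ast dB_r$; controlling it in the a priori estimates forces the extended It\^o formula and the use of the constant $\alpha$ in Assumption \ref{assgener}(iii), and carrying it to the limit simultaneously with the reflecting measure $\nu$ while tracking the change of variables in \eqref{con-k} is the most delicate part of the argument.
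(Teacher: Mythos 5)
Your overall architecture is the same as the paper's: penalize with the drift $n(\pi(u^n)-u^n)$, represent through the flow by the Markovian BDSDE, derive uniform energy and total-variation estimates, pass to the limit, obtain minimality from \reff{prop1}, and identify $\nu$ via the change of variables $x\mapsto B_{t,s}(x)$. However, there is a genuine gap at the central step, namely the proof that $(Y^n,Z^n)$ is Cauchy. When It\^o's formula is applied to $|Y^n_t-Y^p_t|^2$, the two penalization drifts produce, after \reff{prop2}, the cross term
\begin{equation*}
2(n+p)\,\E\E^m\Big[\int_t^T (Y^n_s-\pi(Y^n_s))^\top (Y^p_s-\pi(Y^p_s))\,ds\Big]
\;\le\; 2(n+p)\,\E\E^m\Big[\int_t^T d(Y^n_s,D)\,d(Y^p_s,D)\,ds\Big],
\end{equation*}
which carries the diverging factor $n+p$. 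The ingredients you list --- the uniform energy bounds, the $L^2$ bound $\E\E^m\big[\big(n\int_0^T d(Y^n_s,D)\,ds\big)^2\big]\le C$ (the paper's Lemma \ref{extraestimate}, which you do essentially have via \reff{prop3}), and the $L^1$ decay $\E\E^m\big[\int_0^T d(Y^n_s,D)\,ds\big]\le C/n$ --- do not suffice to kill it: after H\"older one is left with
\begin{equation*}
n\,\E\E^m\Big[\int_0^T d(Y^n_s,D)\,d(Y^p_s,D)\,ds\Big]\le
\Big(\E\E^m\Big[\Big(n\int_0^T d(Y^n_s,D)\,ds\Big)^2\Big]\Big)^{1/2}
\Big(\E\E^m\Big[\sup_{0\le s\le T} d^2(Y^p_s,D)\Big]\Big)^{1/2},
\end{equation*}
and one needs the \emph{uniform-in-time} decay $\E\E^m[\sup_{0\le s\le T} d^2(Y^p_s,D)]\to 0$, which does not follow from any time-integrated estimate. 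This is precisely why the paper proves Lemma \ref{lem} and then Lemma \ref{fundamental:lemma}, $\E\E^m[\sup_{0\le t\le T} d^4(Y^n_t,D)]\to 0$: it requires a separate It\^o computation on $\varphi=d^4(\cdot,D)$ (with the quasilinear $g$-term again absorbed into the nonnegative Hessian trace), the auxiliary convergence $\E\E^m[\int_0^T\langle\nabla\rho(Y^n_s),Z^n_s\rangle^2\,ds]\to 0$ extracted from \eqref{estZ}, and a Burkholder--Davis--Gundy argument to upgrade the pointwise decay to the supremum. Your sketch contains no mechanism to beat the factor $n+p$, so the Cauchy property --- and with it the existence of the limit triple $(Y,Z,K)$ and of $u$ --- is unproved.

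A secondary, more repairable point: your claim that the inequality $\int_0^T (Y^n_s-v_s(B_s))^\top dK^n_s\le 0$ is ``preserved in the limit'' is not automatic, since it is a Stieltjes integral against measures $dK^n$ that converge only in a weak sense; the paper justifies this by extracting a subsequence along which the total variations $\|K^{\phi(n)}\|_{VT}$ stay bounded and $Y^{\phi(\psi(n))}\to Y$ uniformly a.s., and then invoking Lemma 5.8 of \cite{GP}. Likewise, ``uniqueness follows from uniqueness for \eqref{RBDSDE}'' still requires a proof of the latter, which the paper supplies by applying the extended It\^o formula to $|Y^1_t-Y^2_t|^2$, using the minimality condition to discard $\int (Y^1_s-Y^2_s)^\top(dK^1_s-dK^2_s)\le 0$, absorbing the $g$-term via Assumption \ref{assgener}(iii), and concluding with Gronwall --- a short argument, but one your proposal should make explicit rather than cite as known.
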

\subsection{Proof of existence}
The existence of a solution will be proved by a penalization method. For $n\in\N$, we consider the penalized system of quasilinear SPDE:
\be\label{SPDEpen}
\left\lbrace
\begin{aligned}
du^n(s,x) &+ [\frac{1}{2}\Delta u^n(s,x) +f(s,x,u^n (s,x),\nabla u^n(s,x))+div\big(g(s,x,u^n (s,x),\nabla u^n(t,x))\big) ] ds\\
&+  h(s,x,u^n(s,x),\nabla u^n(s,x))\cdot d\W_s -n(u^n(s,x)-\pi (u^n(s,x)))ds=0,\\
u^n(T,x)&=\Phi(x)
\end{aligned}
\right.
\ee
From Denis and Stoica \cite{DS} (Theorem 8), we know that the above equation admits a unique weak solution $\mathcal{U }(\Phi,f^{n},g,h)$ (\eqref{wspde1}), with $
f^{n}(t,x,y)=f(t,x,y,z)-n(y-\pi(y))$, i.e. for every $\varphi \in\mathcal{D}_T$.
\begin{align}\label{o-equa1}
 \nonumber\Int_{t}^{T}&\big[(u^{n}(s,\cdot),\partial
_{s}\varphi(s,\cdot) )+\Frac{1}{2}(\nabla u^{n}(s,\cdot),\nabla\varphi(s,\cdot))\big]ds  +(u^{n}(t,\cdot ),\varphi
(t,\cdot ))-(\Phi(\cdot ),\varphi (T,\cdot))\\
\nonumber&=\Int_{t}^{T}\big[(f^n(s,\cdot,u^{n}(s,\cdot),\nabla u^{n}(s,\cdot)),\varphi(s,\cdot))+(g(s,\cdot,u^{n}(s,\cdot),\nabla
u^{n}(s,\cdot)),\nabla\varphi(s,\cdot))\big]ds\\
&+\Int_{t}^{T}(h(s,\cdot,u^{n}(s,\cdot),\nabla u^{n}(s,\cdot)),\varphi(s,\cdot))d\W_s.
\end{align}
 We are going to show that $(u^n)_{n\geq 1}$ is a Cauchy sequence in $\Hc_T$ with the help of backward doubly stochastic differential equations. Denote \begin{equation}
\begin{split}
\label{representation}
Y_s^{n,t,x}=u^n(s,B_{t,s}(x))\quad , \quad Z^{n,t,x}_s=\nabla u^n(s,B_{t,s}(x))\\
K_s^{n,t,x}=-n\Int_0^s\big[u^n(r,B_{t,r}(x))-\pi(u^n(r,B_{t,r}(x)))\big]dr.
\end{split}
\end{equation}
 By following the representation in  Matoussi and Stoica \cite{MS10} (Theorem 1 p.1148),  we see that $(Y^{n,t,x},Z^{n,t,x})$ solves the following doubly backward stochastic differential equations $\P\otimes \P^m$-a.e.:
\begin{eqnarray}\label{BDSDEpen}
Y_{s}^{n,t,x}&=&\Phi(B_{t,T}(x))+\Int_{s}^{T}f(r,B_{t,r}(x),Y_{r}^{n,t,x},Z_{r}^{n,t,x})dr+\Int_{s}^{T}h(r,B_{t,r}(x),Y_{r}^{n,t,x},Z_{r}^{n,t,x})d\W_r\nonumber\\
&&+K_T^{n,t,x}-K_s^{n,t,x}+\frac{1}{2}\Int_s^T g(r,B_{t,r}(x),Y_{r}^{n,t,x},Z_{r}^{n,t,x})\ast dB_r-\Int_{s}^{T}Z_{r}^{n,t,x}dB_{r}.\nonumber\\
& &
\end{eqnarray}
\begin{Remark}
The subscripts $(t,x)$ will often be dropped for notational simplicity if the context is clear and the notations $B_t=B_{t,s}(x)$ and $B_t^{-1}=B_{t,s}^{-1}(y)$ will be frequently used throughout.
\end{Remark}
\vspace{0.1cm}
\noindent Next we will prove that $(Y^{n},Z^{n}), n\geq 1$ is a Cauchy sequence. To this end, we need to prepare a number of preliminary results.
\noindent We start with the following lemma:
\begin{Lemma} \label{lem}There exists a constant $C>0$ such that
\be
\forall n\in\N \qquad  \E\E^m\big[\Int_0^T d^2(Y_s^n,D)ds\big]\leq \Frac{C}{n}.
\ee
\end{Lemma}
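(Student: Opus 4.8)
The plan is to run the standard penalization estimate by applying the extension of It\^o's formula (for the generalized BDSDEs with forward and backward integrals, as announced in the introduction) to $|Y^{n}_{s}|^{2}$, and to read off the bound from the favourable sign of the penalization term $dK^{n}_{s}=-n\big(Y^{n}_{s}-\pi(Y^{n}_{s})\big)\,ds$ combined with the geometry of the convex set $D$, crucially the assumption $0\in D$. Throughout I write $d(Y^{n}_{s},D)=|Y^{n}_{s}-\pi(Y^{n}_{s})|$.

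The whole computation rests on one elementary geometric inequality. Decomposing
$$\langle Y^{n}_{s},Y^{n}_{s}-\pi(Y^{n}_{s})\rangle=|Y^{n}_{s}-\pi(Y^{n}_{s})|^{2}+\langle \pi(Y^{n}_{s}),Y^{n}_{s}-\pi(Y^{n}_{s})\rangle,$$
the variational characterization of the projection onto $\bar D$ evaluated at the interior point $0\in\bar D$ (this is \eqref{prop1} read with $x'=0$, see also \cite{M}) gives $\langle \pi(Y^{n}_{s}),Y^{n}_{s}-\pi(Y^{n}_{s})\rangle\ge 0$, whence
$$\langle Y^{n}_{s},Y^{n}_{s}-\pi(Y^{n}_{s})\rangle\ \ge\ d^{2}(Y^{n}_{s},D).$$
It is precisely this quadratic lower bound, rather than the linear estimate \eqref{prop3}, that produces the squared distance on the left-hand side of the claim.

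I would then apply It\^o's formula to $|Y^{n}_{s}|^{2}$ on $[s,T]$ and take the expectation $\E\E^{m}$. The backward martingale $\int h\,d\W$ and the forward martingale $\int Z^{n}\,dB$ disappear, the bracket of the latter contributes $+\E\E^{m}\!\int_{s}^{T}\!\|Z^{n}_{r}\|^{2}dr$ on the left, the doubly stochastic term yields $\E\E^m\!\int_s^T\!\|h\|^2dr$ on the right with $|h|\le h^{0}$, and the penalization term gives $-2n\,\E\E^{m}\!\int_{s}^{T}\!\langle Y^{n}_{r},Y^{n}_{r}-\pi(Y^{n}_{r})\rangle dr$. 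Moving this last term to the left and invoking the geometric inequality above, I obtain
$$\E\E^{m}|Y^{n}_{s}|^{2}+\E\E^{m}\!\int_{s}^{T}\!\|Z^{n}_{r}\|^{2}dr+2n\,\E\E^{m}\!\int_{s}^{T}\! d^{2}(Y^{n}_{r},D)\,dr\ \le\ R^{n}_{s},$$
where $R^{n}_{s}$ collects $\E\E^{m}|\Phi(B_{t,T}(x))|^{2}=\|\Phi\|_{2}^{2}<\infty$ (Assumption \ref{assxi}), the $f$-contribution bounded by $\E\E^m\!\int_{s}^{T}\!|Y^{n}_{r}|^{2}dr+\|f^{0}\|_{2,2}^{2}$ via $|f|\le f^{0}$ and Young's inequality, the term $\|h^{0}\|_{2,2}^{2}$, and the contribution of the divergence term $\tfrac12\int g\ast dB$. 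The translation invariance of $\P^{m}=dx\otimes\P^{0}$ is what turns the spatial integrals of the data into the finite norms $\|f^{0}\|_{2,2},\|h^{0}\|_{2,2},\|g^{0}\|_{2,2}$. A backward Gronwall argument applied to $s\mapsto\E\E^{m}|Y^{n}_{s}|^{2}$ first gives $\sup_{s}\E\E^{m}|Y^{n}_{s}|^{2}\le C$ uniformly in $n$; substituting this back and retaining the nonnegative term $2n\,\E\E^{m}\!\int_{0}^{T}\! d^{2}(Y^{n}_{r},D)\,dr$ yields $2n\,\E\E^{m}\!\int_{0}^{T}\! d^{2}(Y^{n}_{r},D)\,dr\le C$, and division by $2n$ is exactly the assertion.

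The main obstacle is the treatment of the forward–backward integral $\tfrac12\int g\ast dB$ inside It\^o's formula: unlike the ordinary It\^o integrals it is not a martingale, and its interaction with $-\int Z^{n}\,dB$ generates, besides a part of vanishing expectation, a Lebesgue covariation term of the form $\E\E^{m}\!\int_{s}^{T}\!\langle Z^{n}_{r},g(r,\cdot)\rangle dr$ (together with the divergence representation \eqref{forward-backward}). I would control it exactly as in the a priori estimates alluded to in the introduction: using $|g|\le g^{0}$ and Young's inequality it is dominated by $\tfrac12\E\E^{m}\!\int_{s}^{T}\!\|Z^{n}_{r}\|^{2}dr+C\|g^{0}\|_{2,2}^{2}$, and the $\|Z^{n}\|^{2}$ part is absorbed into the identical term already present on the left. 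Note that no use of the contraction condition \ref{assgener}(iii) is needed for this particular lemma, since here $g$ is merely bounded; with this term controlled by constants independent of $n$, every constant entering the displayed inequality is independent of $n$ and the rate $C/n$ follows.
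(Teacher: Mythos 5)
Your proof is correct, but it follows a genuinely different route from the paper. The paper applies the extended double stochastic It\^o formula directly to $\rho(Y^n_t)=d^2(Y^n_t,D)$ (Corollary 1 and Remark 2 of \cite{MS10}), which forces it to manage the Hessian terms of the squared distance: the cross term with $g$ is controlled by the Cauchy--Schwarz trick in \eqref{2.5} using positive semidefiniteness of $Hess\,\rho$, the penalization contributes $-2n\int\rho\,ds$ exactly because $\nabla\rho(y)=2(y-\pi(y))$, and the $f$-term is split by Young's inequality with the $n$-dependent weight $a=\sqrt{\tfrac{n}{2}\rho(Y^n_s)}$, yielding $\tfrac{3n}{2}\,\E\E^m\int_t^T d^2(Y^n_s,D)\,ds\le C(1+\tfrac1n)$ in \eqref{estdist}. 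You instead apply It\^o to the smooth function $|Y^n_s|^2$ --- essentially the a priori computation \eqref{estimat}/\eqref{estunif} of the Appendix --- but retain the penalization term rather than discarding it, and extract the squared distance via the convexity inequality $\langle y,y-\pi(y)\rangle\ge |y-\pi(y)|^2$, valid because $0\in\bar D$. Your treatment of the forward--backward term (zero expectation of $\int g\ast dB$ under $\P\otimes\P^m$, the covariation $-2\int trace[(Z^n)^\top g]\,ds$ absorbed by Young's inequality into $\tfrac12\int\|Z^n\|^2$) matches exactly how the paper handles the same terms in Lemma \ref{estapriori}, and your observation that the contraction condition of Assumption \ref{assgener}(iii) is not needed here is accurate. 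What each approach buys: yours is more elementary, avoiding It\^o's formula for the merely $C^{1,1}$ function $d^2(\cdot,D)$ and all Hessian bookkeeping, while the paper's computation with $\rho$ serves as the template that is reused verbatim (applied to $\varphi=\rho^2$) in the fourth-moment estimate of Lemma \ref{fundamental:lemma}, so the authors get that subsequent lemma nearly for free. One small imprecision: \eqref{prop1} read with $x'=0$ yields $\langle y, y-\pi(y)\rangle\ge 0$, not $\langle \pi(y), y-\pi(y)\rangle\ge 0$; the latter is the standard variational characterization of the projection, $\langle x'-\pi(y), y-\pi(y)\rangle\le 0$ for $x'\in\bar D$, evaluated at $x'=0$ --- true and standard (cf.\ \cite{M}), but not literally one of the three displayed properties \eqref{prop1}--\eqref{prop3}, so you should cite it as such.
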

\begin{proof}
We apply the double stochastic It\^o's formula extended in Matoussi and Stoica (Corollay 1 and Remark 2 in \cite{MS10} p.1158) to $\rho(u^n(t,B_t))=\rho(Y_t^n)=d^2(Y_t^n,D)=|Y_t^n-\pi(Y_t^n)|^2$ to obtain
\be\label{2.3}\begin{split}
\rho(Y_t^n)&+ \Frac{1}{2}\Int_t^T trace[Z_s^nZ_s^{n \top}Hess \rho(Y_s^n)]ds = \rho(\Phi(B_{T}))
+ \Int_t^T (\nabla \rho(Y_s^n))^\top f(s,B_s,Y_s^n,Z_s^n)ds\\&- \Int_t^T (\nabla \rho(Y_s^n))^\top Z_s^n dB_s + \Int_t^T (\nabla \rho(Y_s^n))^\top h(s,B_s,Y_s^n,Z_s^n)d\W_s\\
& + \Frac{1}{2}\Int_t^T trace[(hh^\top)(s,B_s,Y_s^n,Z_s^n)Hess \rho(Y_s^n)]ds - 2n \Int_t^T (Y_s^n- \pi(Y_s^n))^\top(Y_s^n-\pi(Y_s^n))ds\\
&+\frac{1}{2}\sum_{i=1}^k\Int_t^T\frac{\partial \rho}{\partial y_i}(Y_s^n)g^i(s,B_s,Y_s^n,Z_s^n)\ast dB_s-\sum_{i=1}^k\Int_t^T\langle\nabla [\frac{\partial \rho}{\partial y_i}(u^n(s,B_s))], g^i(s,B_s,Y_s^n,Z_s^n)\rangle ds,
\label{itorho}
\end{split}
\ee
where $\nabla$ is taken for the argument $x\in \R^d$. Now, for the last term we have
\begin{align}\label{2.4}
-\sum_{i=1}^k & \Int_t^T\langle\nabla [\frac{\partial \rho}{\partial y_i}(u^n(s,B_s))], g^i(s,Y_s^n,Z_s^n)\rangle ds=\!-\sum_{i=1}^k\Int_t^T\sum_{l=1}^d\frac{\partial }{\partial x_l}\big[\frac{\partial \rho}{\partial y_i}(u^n(s,B_s))\big] g^{il}(s,Y_s^n,Z_s^n)ds\nonumber\\
&=\!-\sum_{i=1}^k\Int_t^T\sum_{l=1}^d\sum_{j=1}^k\big[\frac{\partial^2 \rho}{\partial y_j\partial y_i}(u^n(s,B_s))\frac{\partial u^{n,j}(B_s)}{\partial x_l}\big] g^{il}(s,Y_s^n,Z_s^n)ds\nonumber\\
&=-\sum_{i=1}^k\Int_t^T\sum_{l=1}^d\sum_{j=1}^k\big[\frac{\partial^2 \rho}{\partial y_j\partial y_i}(Y^n_s)Z^{n,jl}_s\big] g^{il}(s,Y_s^n,Z_s^n)ds\nonumber\\
&=-\sum_{l=1}^d\Int_t^T\langle Hess\rho(Y^n_s)Z^{n,\cdot l}_s, g^{\cdot l}(s,Y_s^n,Z_s^n)\rangle ds,
\end{align}
where $Z^{n,\cdot l}_s, g^{\cdot l}$ stands for the column vector. Noting that
\begin{align*}
|\langle &Hess\rho(Y^n_s)Z^{n,\cdot l}_s, g^{\cdot l}(s,B_s,Y_s^n,Z_s^n)\rangle |\\
&\leq  \langle Hess\rho(Y^n_s)Z^{n,\cdot l}_s, Z^{n,\cdot l}_s\rangle^{\frac{1}{2}}\langle Hess\rho(Y^n_s)g^{\cdot l}(s,B_s,Y_s^n,Z_s^n), g^{\cdot l}(s,B_s,Y_s^n,Z_s^n)\rangle^{\frac{1}{2}},
\end{align*}
it follows that
\begin{align}
\label{2.5}
-\sum_{i=1}^k& \Int_t^T\langle\nabla [\Frac{\partial \rho}{\partial y_i}(u^n(s,B_s))] g^i(s,B_s,Y_s^n,Z_s^n)\rangle ds\nonumber\\
&\leq \Frac{1}{4}\sum_{l=1}^d\Int_t^T\langle Hess\rho(Y^n_s)Z^{n,\cdot l}_s, Z^{n,\cdot l}_s\rangle ds+C\sum_{l=1}^d\Int_t^T\langle Hess\rho(Y^n_s)g^{\cdot l}(s,Y_s^n,Z_s^n),g^{\cdot l}(s,B_s,Y_s^n,Z_s^n)\rangle ds\nonumber\\
&\leq \frac{1}{4}\Int_t^Ttrace[Z_s^nZ_s^{n \top}Hess\rho(Y^n_s)]ds+C\Int_t^Ttrace[gg^\top Hess\rho(Y^n_s)]ds.\nonumber\\
&\leq \frac{1}{4}\Int_t^Ttrace[Z_s^nZ_s^{n \top}Hess\rho(Y^n_s)]ds+C\Int_t^T \|g^0(s,B_s) \|^2ds.\nonumber\\
\end{align}
Since $\Phi(B_{T})\in \bar{D} ~a.s.$, we have that $\rho(\Phi(B_{T}))=0$.
Substituting \eqref{2.5} into \eqref{2.3} and taking into account Assumption \ref{assgener} (i) and the boundedness of the Hessian of $\rho$ we obtain that
\be
\begin{split}
\rho(Y_t^n)&+ \Frac{1}{4}\Int_t^T trace[Z_s^nZ_s^{n \top}Hess \rho(Y_s^n)]ds  + 2 n\Int_t^T d^2(Y_s^n,D)ds\\
&\leq 2 \Int_t^T (\rho(Y_s^n))^{1/2} |f(s,B_s,Y_s^n,Z_s^n)|ds - 2 \Int_t^T (Y_s^n- \pi(Y_s^n))^\top Z_s^n dB_s\\
&+2 \Int_t^T (Y_s^n- \pi(Y_s^n))^\top h(s,B_s,Y_s^n,Z_s^n)d\W_s + C\Int_t^T \|h^0(s,B_s) \|^2ds\\
&+\frac{1}{2}\sum_{i=1}^k\Int_t^T\frac{\partial \rho}{\partial y_i}g^i(s,B_s,Y_s^n,Z_s^n)\ast dB_s+C\Int_t^T \|g^0(s,B_s) \|^2ds .
\end{split}
\ee
Now the inequality $2ab\leq a^2+b^2$ with $a=\sqrt{\Frac{n}{2}\rho(Y_s^n)}$ yields
\b*
(\rho(Y_s^n))^{1/2} |f(s,B_s,Y_s^n,Z_s^n)|&\leq &\Frac{n}{4}\rho(Y_s^n)+\Frac{1}{n} |f(s,B_s,Y_s^n,Z_s^n)|^2\nonumber\\
&\leq &\Frac{n}{4}\rho(Y_s^n)+\Frac{1}{n} |f^0(s,B_s)|^2.
\e*
Then it follows that
\be
\begin{split}
\rho(Y_t^n)&+ \Frac{1}{4}\Int_t^T trace[Z_s^nZ_s^{n \top}Hess \rho(Y_s^n)]ds  + \Frac{3n}{2}\Int_t^T d^2(Y_s^n,D)ds\\
&\leq  2\Int_t^T\Frac{1}{n}|f^0(s,B_s)|^2ds - 2 \Int_t^T (Y_s^n- \pi(Y_s^n))^\top Z_s^n dB_s \\
&+2 \Int_t^T (Y_s^n- \pi(Y_s^n))^\top h(s,B_s,Y_s^n,Z_s^n)d\W_s + C\Int_t^T \|h^0(s,B_s) \|^2ds\\
&+\frac{1}{2}\sum_{i=1}^k\Int_t^T\frac{\partial \rho}{\partial y_i}g^i(s,B_s,Y_s^n,Z_s^n)\ast dB_s+C\Int_t^T \|g^0(s,B_s) \|^2ds .
\label{estY}
\end{split}
\ee
By taking expectation and using  the fact that under the measure $\P^m$ the forward-backward integral $\Int \frac{\partial \rho}{\partial y_i}g^i(s,B_s,Y_s^n,Z_s^n)\ast dB_s$ as well the other stochastic integrals with respect to the Brownian terms have null expectation under $\P\otimes\P^m$, we have for all $0\leq t\leq T$
\be
\begin{split}
\E\E^m[\rho(Y_t^n)]+\Frac{1}{4}\E\E^m[\Int_t^T trace[Z_s^nZ_s^{n \top}Hess \rho(Y_s^n)]ds] + \Frac{3n}{2}\E\E^m[\Int_t^T d^2(Y_s^n,D)ds]\leq  C\big(1+\Frac{1}{n}\big).\\
\label{estdist}
\end{split}
\ee
Hence, we deduce that
$$\forall n\in\N \qquad  \E\E^m\big[\Int_0^T d^2(Y_s^n,D)ds\big]\leq \Frac{C}{n}.$$
\ep
\end{proof}
In order to prove the strong convergence of the sequence $(Y^n,Z^n,K^n)$, we shall need the following result.
\begin{Lemma}
\label{fundamental:lemma}
\be
\E\E^m\Big[\underset{0\leq t \leq T}{\Sup}(d(Y_t^n,D))^4\Big]\underset{n\rightarrow +\infty}{\longrightarrow} 0.
\label{dist}
\ee
\end{Lemma}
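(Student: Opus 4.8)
The plan is to run the same double stochastic Itô calculus as in Lemma \ref{lem}, but applied to the function $\Psi(y):=\rho(y)^2=d^4(y,D)$ rather than to $\rho(y)=d^2(y,D)$ itself. The reason is structural: at the level of $\rho$ the second-order (quadratic-variation) contributions coming from $h$ and $g$ are only controlled by $C\|h^0\|^2\mathbf{1}_{\{Y\notin D\}}$ and $C\|g^0\|^2\mathbf{1}_{\{Y\notin D\}}$, which do \emph{not} vanish as $n\to\infty$. For $\Psi=\rho^2$ one has $\nabla\Psi=2\rho\,\nabla\rho$ and $\mathrm{Hess}\,\Psi=2\,\nabla\rho\,(\nabla\rho)^\top+2\rho\,\mathrm{Hess}\,\rho$, and since $\nabla\rho(y)=2(y-\pi(y))$ satisfies $|\nabla\rho|^2=4\rho$, every term that previously carried an indicator now carries a factor $\rho=d^2$. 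Concretely $\mathrm{tr}[hh^\top\mathrm{Hess}\,\Psi]\le C\rho\|h^0\|^2$, $\mathrm{tr}[gg^\top\mathrm{Hess}\,\Psi]\le C\rho\|g^0\|^2$, and the first-order $f$-term is bounded by $C\rho^{3/2}|f^0|=Cd^3|f^0|$; the penalization produces the favorable term $-4n\Int_t^T d^4(Y_s^n,D)\,ds$, while $\tfrac12\mathrm{tr}[ZZ^\top\mathrm{Hess}\,\Psi]\ge|Z^\top\nabla\rho|^2\ge0$ (using convexity of $\rho$) stays on the good side.

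First I would reproduce the treatment of the divergence term exactly as in \eqref{2.4}--\eqref{2.5}, using Cauchy--Schwarz to absorb the $Z$--$g$ cross term into $\tfrac14\Int \mathrm{tr}[ZZ^\top\mathrm{Hess}\,\Psi]\,ds+C\Int d^2\|g^0\|^2\,ds$, and then dispose of the drift by Young's inequality, $Cd^3|f^0|\le 2n\,d^4+\tfrac{C}{n^3}|f^0|^4$, the $2n\int d^4$ being absorbed into the penalization. This leaves a pathwise inequality of the form
\begin{align*}
&d^4(Y_t^n,D)+c_0\Int_t^T|Z_s^\top\nabla\rho(Y_s^n)|^2\,ds+2n\Int_t^T d^4(Y_s^n,D)\,ds\\
&\qquad\le \Frac{C}{n^3}\Int_t^T|f^0|^4\,ds+C\Int_t^T d^2(Y_s^n,D)\big(\|h^0\|^2+\|g^0\|^2\big)\,ds+M_t,
\end{align*}
where $M_t$ gathers the three martingales $\Int_t^T\nabla\Psi^\top Z\,dB$, $\Int_t^T\nabla\Psi^\top h\,d\W$, and the forward--backward term $\tfrac12\sum_i\Int_t^T\partial_{y_i}\Psi\,g^i\ast dB$.

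The argument then splits into two phases. Taking $\E\E^m$ at a fixed $t$, the martingales drop out, and since $f^0$ is bounded and in $\mathbf{L}^2$ while $\E\E^m[\Int_0^T d^2(Y_s^n,D)\,ds]\le C/n$ by Lemma \ref{lem}, the whole right-hand side tends to $0$; this yields at once $\E\E^m[\Int_0^T d^4]\to0$ and, crucially, the energy bound $\E\E^m[G^n]\to0$ with $G^n:=\Int_0^T|Z_s^\top\nabla\rho(Y_s^n)|^2\,ds$. In the second phase I take $\Sup_{t}$ in the pathwise inequality, then $\E\E^m$, and estimate $\E\E^m[\Sup_t|M_t|]$ by Burkholder--Davis--Gundy. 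The $Z$-martingale contributes $C\,\E\E^m\big[(\Sup_t d^4\cdot G^n)^{1/2}\big]$ (using $|(\nabla\Psi)^\top Z|^2=4\rho^2|Z^\top\nabla\rho|^2\le 4\,\Sup_t d^4\cdot|Z^\top\nabla\rho|^2$), while the $h$- and $g$-martingales contribute $C\,\E\E^m\big[(\Sup_t d^4\cdot\Int_0^T d^2\|h^0\|^2)^{1/2}\big]$ and the analogous $g^0$ term. Applying Young's inequality with a small constant to each, I absorb a total of $\tfrac12\E\E^m[\Sup_t d^4]$ into the left-hand side, and the remainders are multiples of $\E\E^m[G^n]$ and of $\E\E^m[\Int_0^T d^2\|h^0\|^2]\le C\|h^0\|_\infty^2/n$, all of which tend to $0$ by Phase~1 and Lemma \ref{lem}. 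This gives $\E\E^m[\Sup_t d^4(Y_t^n,D)]\to0$.

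The main obstacle is precisely the control of $\E\E^m[\Sup_t|M_t|]$: once the supremum is taken, the quadratic variations couple the gradient process $Z^n$ (for which only $\mathbf{L}^2$-type information is available) with powers of the distance $d(Y^n,D)$. The device that makes it work is to arrange, through the choice $\Psi=d^4$, that each such quadratic variation factors as $\Sup_t d^4$ times either the gradient energy $|Z^\top\nabla\rho|^2$ or $d^2\|h^0\|^2$ / $d^2\|g^0\|^2$---quantities whose time integrals are shown in Phase~1 to converge to $0$ in $\mathbf{L}^1(\P\otimes\P^m)$---so that a single BDG estimate followed by Young's inequality both isolates an absorbable copy of $\Sup_t d^4$ and bounds everything else by already-vanishing terms. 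One should also verify that the extended Itô formula of \cite{MS10} applies to $\rho^2$; this is legitimate since, on a convex $C^2$ domain, $\rho$ is $C^{1,1}$ with bounded Hessian, hence so is $\rho^2$.
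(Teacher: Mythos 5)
Your proposal is correct and follows essentially the same route as the paper: apply the extended double stochastic It\^o formula to $\varphi=\rho^2=d^4(\cdot,D)$, absorb the $Z$--$g$ cross term into $\tfrac14\int \mathrm{trace}[Z Z^{\top}\mathrm{Hess}\,\varphi]$ exactly as in \eqref{2.5}, take expectations first to get both $\sup_t\E\E^m[\varphi(Y_t^n)]\le C/n$ and the vanishing of the gradient energy $\E\E^m[\int\langle\nabla\rho,Z^n\rangle^2]$ via $2\nabla\rho\otimes\nabla\rho\le \mathrm{Hess}\,\varphi$, then take the supremum and use Burkholder--Davis--Gundy plus Young's inequality to absorb $\tfrac14\E\E^m[\sup_t\varphi]$, with all remainders controlled by Lemma \ref{lem}. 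The only cosmetic deviations are your Young estimate on the $f$-term ($Cd^3|f^0|\le 2n\,d^4+Cn^{-3}|f^0|^4$ versus the paper's $2\varphi+2Cd^2$, both of which work under the boundedness of $f^0$) and your explicit $C^{1,1}$ justification for applying It\^o's formula to $\rho^2$, which the paper leaves implicit in its citation of \cite{MS10}.
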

\begin{proof}
We denote by $\rho(x)=d^2(x,D)$ and $\varphi(x)= \rho^2(x)$.
By applying the double stochastic It\^o's formula to $\varphi(Y_t^n)=d^4(Y_t^n,D)$, we obtain that
\be\begin{split}\label{2.6}
\varphi(Y_t^n)&+ \Frac{1}{2}\Int_t^T trace[Z_s^nZ_s^{n \top}Hess \varphi(Y_s^n)]ds = \varphi(\Phi(B_{T}))
+ \Int_t^T (\nabla \varphi(Y_s^n))^\top f(s,B_s,Y_s^n,Z_s^n)ds\\&- \Int_t^T (\nabla \varphi(Y_s^n))^\top Z_s^n dB_s + \Int_t^T (\nabla \varphi(Y_s^n))^\top h(s,B_s,Y_s^n,Z_s^n)d\W_s\\
& + \Frac{1}{2}\Int_t^T trace[(hh^\top)(s,B_s,Y_s^n,Z_s^n)Hess \varphi_(Y_s^n)]ds - n \Int_t^T \nabla \varphi(Y_s^n))^\top(Y_s^n-\pi(Y_s^n))ds\\
&+\frac{1}{2}\Int_t^T (\nabla \varphi(Y^n_s))^\top g(s,B_s,Y_s^n,Z_s^n)\ast dB_s-\sum_{i=1}^k\Int_t^T \langle\nabla \frac{\partial \varphi}{\partial y_i}(u^n(s,\cdot)),g^i(s,B_s,u^n(s,\cdot),\nabla u^n(s,\cdot))\rangle(B_s)ds.
\label{ito}
\end{split}
\ee
Using the similar arguments leading to the proof of (\ref{2.5}), we obtain
\begin{align}\label{2.7}
-\sum_{i=1}^k& \Int_t^T\langle\nabla [\frac{\partial \varphi}{\partial y_i}(u^n(s,B_s))], g^i(s,B_s,Y_s^n,Z_s^n)\rangle ds\nonumber\\
&\leq \frac{1}{4}\sum_{l=1}^d\Int_t^T\langle Hess\varphi(Y^n_s)Z^{n,\cdot l}_s, Z^{n,\cdot l}_s\rangle ds
+C\sum_{l=1}^d\Int_t^T\langle Hess\varphi(Y^n_s)g^{\cdot l}(s,B_s,Y_s^n,Z_s^n), g^{\cdot l}(s,B_s,Y_s^n,Z_s^n)\rangle ds\nonumber\\
&\leq\frac{1}{4}\Int_t^Ttrace[Z_s^nZ_s^{n\top}Hess\varphi(Y^n_s)]ds
+C\Int_t^Ttrace[gg^\top Hess\varphi(Y^n_s)]ds.
\end{align}
Since $\Phi(B_{T})\in \bar{D} ~a.s.$, we have that $\varphi(\Phi(B_{T}))=0$ and it is easy to see that \be
\nabla \varphi(x)&=&2\rho(x)\nabla\rho(x)=4\rho(x)(x-\pi(x)) \label{gradphi} \\
Hess \varphi(x)&=&2\nabla\rho(x)\otimes\nabla\rho(x)+2\rho(x) Hess \rho(x)\label{hessphi}.
\ee
Combining (\ref{2.6}), (\ref{2.7}) together  it follows that
\be\label{Ito}
\begin{split}
\varphi(Y_t^n)+ \Frac{1}{4}&\Int_t^T trace[Z_s^nZ_s^{n \top}Hess \varphi(Y_s^n)]ds
\leq  4\Int_t^T (\rho(Y_s^n)(Y_s^n-\pi(Y_s^n))^\top f(s,B_s,Y_s^n,Z_s^n)ds\\&- 4\Int_t^T (\rho(Y_s^n)(Y_s^n-\pi(Y_s^n))^\top Z_s^n dB_s + 4\Int_t^T (\rho(Y_s^n)(Y_s^n-\pi(Y_s^n))^\top h(s,B_s,Y_s^n,Z_s^n)d\W_s\\
&+\frac{1}{2}\Int_t^T (\nabla \varphi)^\top(Y^n_s)g(s,B_s,Y_s^n,Z_s^n)\ast dB_s- 4n \Int_t^T \rho^2(Y_s^n)ds\\
& + \Frac{1}{2}\Int_t^T trace[(hh^\top)(s,B_s,Y_s^n,Z_s^n)Hess \varphi(Y_s^n)]ds +C\Int_t^Ttrace[gg^\top Hess\varphi(Y^n_s)]ds.
\end{split}
\ee
By taking expectation under $\P\otimes\P^m$ we have
\be
\begin{split}\label{estvarphi}
\E\E^m[\varphi(Y_t^n)]&+ \Frac{1}{4}\E\E^m\big[\Int_t^T trace[Z_s^nZ_s^{n \top}Hess \varphi(Y_s^n)]ds\big]+4n \E\E^m\big[\Int_t^T \varphi(Y_s^n)ds\big]\\
& \leq 4\E\E^m\big[\Int_t^T (\rho(Y_s^n)(Y_s^n-\pi(Y_s^n))^\top f(s,B_s,Y_s^n,Z_s^n)ds]\\
 &+ \Frac{1}{2}\E\E^m\Big[\Int_t^T trace[(hh^\top)(s,B_s,Y_s^n,Z_s^n)Hess \varphi(Y_s^n)]ds]\\
 &+C\E\E^m[\Int_t^Ttrace[gg^\top Hess\varphi(Y^n_s)]ds].
\end{split}
\ee
Taking into account the boundedness   of $h$ and $Hess\rho$, we have
\be \label{esth}
\begin{split}
\E\E^m\Big[\Int_t^T trace[(hh^\top)(s,B_s,Y_s^n,Z_s^n)& Hess \varphi(Y_s^n)]ds]\leq 2\E\E^m\Big[\Int_t^T\langle h(s,B_s,Y_s^n,Z_s^n),\nabla\rho(Y_s^n)\rangle^2 ds\Big]\\
& + \E\E^m\Big[\Int_t^T 2\rho(Y_s^n)trace[(hh^\top)(s,B_s,Y_s^n,Z_s^n)Hess \rho(Y_s^n)]ds\Big]\\
&\leq C\E\E^m\Big[\Int_t^T|\nabla\rho(Y_s^n)|^2 ds\Big] + C\E\E^m\Big[\Int_t^T 2\rho(Y_s^n)ds\Big]\\
&\leq C  \E\E^m\Big[\Int_0^T(d(Y_s^n,D))^{2}ds\Big].
\end{split}
\ee
Apply the same argument to obtain
\be\label{2.8}
\E\E^m\Big[\Int_t^Ttrace[(gg^\top)(s,B_s,Y_s^n,Z_s^n)Hess\varphi(Y^n_s)]ds\Big ]
\leq C  \E\E^m\Big[\Int_0^T(d(Y_s^n,D))^{2}ds\Big].
\ee
Now the inequality $2ab\leq a^2+b^2$ with $a=(d(Y_s^n,D))^{2}$ and the boundedness of $f$ yield
\be
\begin{split}\label{estf}
4(d(Y_s^n,D))^{3} |f(s,B_s,Y_s^n,Z_s^n)|&\leq 2 (d(Y_s^n,D))^{4}+ 2 (d(Y_s^n,D))^{2}|f(s,B_s,Y_s^n,Z_s^n)|^2\\
&\leq 2 \varphi(Y_s^n) + 2C(d(Y_s^n,D))^{2} .
\end{split}
\ee
By plugging the estimate (\ref{estf}), (\ref{2.8}) and \eqref{esth} in (\ref{estvarphi}), we obtain thanks to lemma \ref{lem}
\be\label{estphi}
\begin{split}
\E\E^m[\varphi(Y_t^n)]+ \Frac{1}{4}\E\E^m\big[\Int_t^T trace [Z_s^nZ_s^{n \top}& Hess \varphi(Y_s^n)]ds\big]+(4n-2) \E\E^m\big[\Int_t^T \varphi(Y_s^n)ds\big]\\
& \leq C\E\E^m\big[\Int_0^T (d(Y_s^n,D))^{2}ds\big]\leq \Frac{C}{n}.
\end{split}
\ee
Notice also that  Hessian of $ \varphi(Y_s^n)$ is a positive definite matrix since $ \varphi$ is a convex function, so we get that $ \E\big[\Int_t^T trace [Z_s^nZ_s^{n *} Hess \varphi(Y_s^n)]ds\big] \geq 0$ and consequently
\be\label{unifY}
\underset{0\leq t\leq T}{\Sup}\E\E^m[\varphi(Y_t^n)]\leq \Frac{C}{n}.
\ee
Moreover, we can deduce from \eqref{estphi} that, for every $t\in[0,T]$
\be\label{estZ}
 \E\E^m\big[\Int_t^T trace [Z_s^nZ_s^{n \top} Hess \varphi(Y_s^n)]ds\big]\longrightarrow 0, \, \text{as}\, n\rightarrow \infty.
\ee
On the other hand, taking the supremum over $t$ in the equation \eqref{Ito} and by Burkholder-Davis-Gundy's inequlity and the previous calculations it follows that
\begin{align}
\label{uniformestimate}
\begin{split}
\E\E^m[\underset{0\leq t\leq T}{\Sup}\varphi(Y_t^n)]&\leq C \E\E^m[\Int_0^T\varphi(Y_s^n) ds]+C\E\E^m\Big[\Int_0^T (d(Y_s^n,D))^{2}ds\Big]\\
&+C\E\E^m\Big[\underset{0\leq t\leq T}{\Sup}\Int_t^T (\rho(Y_s^n)\nabla\rho(Y_s^n))^\top Z_s^n dB_s\Big]\\
&+ C \E\E^m\Big[\underset{0\leq t\leq T}{\Sup}\Int_t^T (\rho(Y_s^n)\nabla\rho(Y_s^n))^\top h_s(Y_s^n,Z_s^n)d\W_s\Big]\\
&+ C \E\E^m\Big[\underset{0\leq t\leq T}{\Sup}\Int_t^T (\nabla\varphi(Y_s^n))^\top g_s(Y_s^n,Z_s^n)\ast d\B_s\Big]\\
&\leq C \E\E^m[\Int_0^T\varphi(Y_s^n) ds]+C\E\Big[\Int_0^T (d(Y_s^n,D))^{2}ds\Big]\\&+C\E\E^m\Big[\Big(\Int_0^T (\rho(Y_s^n))^2\langle\nabla\rho(Y_s^n), Z_s^n\rangle^2 ds\Big)^{1/2}\Big]\\
&+ C\E\E^m\Big[\Big(\Int_0^T (\rho(Y_s^n))^2\langle\nabla\rho(Y_s^n), h_s(Y_s^n,Z_s^n)\rangle^2 ds\Big)^{1/2}\Big]\\
&+ C\E\E^m\Big[\Big(\Int_0^T (\rho(Y_s^n))^2|\langle\nabla\rho(Y_s^n), g_s(Y_s^n,Z_s^n)\rangle|^2 ds\Big)^{1/2}\Big].
\end{split}
\end{align}
From the boundedness of $h$ and the fact that $\nabla\rho^2(x)=4\rho(x)$, we have
\be \label{estimate1}
\begin{split}
\E\E^m\Big[\Big(\Int_0^T (\rho(Y_s^n))^2&\langle\nabla\rho(Y_s^n), h(s,B_s,Y_s^n,Z_s^n)\rangle^2 ds\Big)^{1/2}\Big]\leq C \E\E^m\Big[\Big(\Int_0^T (\rho(Y_s^n))^2\rho(Y_s^n) ds\Big)^{1/2}\Big]\\
&\leq C\E\E^m\Big[\underset{0\leq s\leq T}{\Sup}\big(\varphi(Y_s^n)\big) ^{1/2}\Big(\Int_0^T \rho(Y_s^n) ds\Big)^{1/2}\Big]\\
&\leq  \Frac{1}{4}\E\E^m\Big[\underset{0\leq s\leq T}{\Sup}\varphi(Y_s^n)\Big]+C^2 \E\E^m\Big[\Int_0^T (d(Y_s^n,D))^2ds\Big].
\end{split}
\ee
Similarly,
\be \label{2.9}
\begin{split}
&\E\E^m\Big[\Big(\Int_0^T (\rho(Y_s^n))^2|\langle\nabla\rho(Y_s^n), g(s,B_s,Y_s^n,Z_s^n)\rangle|^2 ds\Big)^{1/2}\Big]\\
&\leq  \Frac{1}{4}\E\E^m\Big[\underset{0\leq s\leq T}{\Sup}\varphi(Y_s^n)\Big]+C^2 \E\E^m\Big[\Int_0^T (d(Y_s^n,D))^2ds\Big].
\end{split}
\ee
By the Holder's inequality, we obtain
\be \label{estimate2}
\begin{split}
\E\E^m\Big[\Big(\Int_0^T (\rho(Y_s^n))^2\langle\nabla\rho(Y_s^n), Z_s^n\rangle^2 ds\Big)^{1/2}\Big]&\leq C \E\E^m\Big[\underset{0\leq s\leq T}{\Sup}\big(\varphi(Y_s^n)\big) ^{1/2}\Big(\Int_0^T\langle\nabla\rho(Y_s^n), Z_s^n\rangle^2 ds\Big)^{1/2}\Big]\\
&\leq  \Frac{1}{4}\E\E^m\Big[\underset{0\leq s\leq T}{\Sup}\varphi(Y_s^n)\Big]+C^2 \E\E^m\Big[\Int_0^T\langle\nabla\rho(Y_s^n), Z_s^n\rangle^2 ds\Big].
\end{split}
\ee
Substituting \eqref{estimate1}, (\ref{2.9}) and \eqref{estimate2} in \eqref{uniformestimate} leads to
\be \label{estimate3}
\begin{split}
\E\E^m[\underset{0\leq t\leq T}{\Sup}\varphi(Y_t^n)]&\leq C \E\E^m[\Int_0^T\varphi(Y_s^n) ds]+C\E\E^m\Big[\Int_0^T (d(Y_s^n,D))^{2}ds\Big]\\
&+C^2 \E\E^m\Big[\Int_0^T\langle\nabla\rho(Y_s^n), Z_s^n\rangle^2 ds\Big].
\end{split}
\ee
Since $ \rho$ is a convex function,  Hessian of $ \rho(Y_s^n)$ is a positive semidefinite matrix.  So we have
$ \E\big[\Int_t^T trace [Z_s^nZ_s^{n *} \rho(Y_s^n)Hess \rho(Y_s^n)]ds\big] \geq 0$. By the equation \eqref{hessphi}, we can deduce that
\be
2\E\Big[\Int_0^T\langle\nabla\rho(Y_s^n), Z_s^n\rangle^2 ds\Big]\leq \E\Big[\Int_0^T trace[Z_s^nZ_s^{n *} Hess \varphi(Y_s^n)]ds\Big],
\ee   and from \eqref{estZ}, we get

\b*
\E\E^m\Big[\Int_0^T\langle\nabla\rho(Y_s^n), Z_s^n\rangle^2 ds\Big]\longrightarrow 0 \, \text{as}\, n\rightarrow\infty.
\e*
Finally, by using \eqref{unifY}, \eqref{estimate3} and Lemma \ref{lem}, we get the desired result.
\ep
\end{proof}
\vspace{0.2cm}
\begin{Lemma} \label{lemma-conv}
The sequence $(Y^n,Z^n)$ is a Cauchy sequence in ${\mathcal S}^2_{k}([0,T]) \times {\mathcal H}^2_{k\times d}([0,T])$, i.e.
\begin{equation}
\label{conv}
\E\E^m[\underset{0\leq t\leq T}{\Sup}|Y_t^n- Y_t^p|^2 + \Int_0^T \|Z_t^n-Z_t^p\|^2dt]\longrightarrow 0 \quad\text{as}\quad n,p \rightarrow +\infty.
\end{equation}
\end{Lemma}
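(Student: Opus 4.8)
The plan is to difference the two penalized equations and run a backward doubly stochastic energy estimate, the only genuinely new feature being the control of the reflection terms. Both triples $(Y^n,Z^n,K^n)$ and $(Y^p,Z^p,K^p)$ solve \reff{BDSDEpen} with the \emph{same} terminal value $\Phi(B_{t,T}(x))$, so applying the extended double stochastic It\^o formula of Matoussi--Stoica to $|Y^n_s-Y^p_s|^2$ on $[t,T]$ makes the terminal contribution vanish and yields, after taking $\E\E^m$ (which kills the $dB$-, $d\W$- and forward--backward martingale parts exactly as in Lemma \ref{lem}), an identity of the schematic form
\begin{align*}
\E\E^m|Y^n_t-Y^p_t|^2 + \E\E^m\Int_t^T\|Z^n_s-Z^p_s\|^2\,ds &= 2\,\E\E^m\Int_t^T (Y^n_s-Y^p_s)^\top\big(f^n_s-f^p_s\big)\,ds\\
&\quad + \E\E^m\Int_t^T\|h^n_s-h^p_s\|^2\,ds + \E\E^m\, G^{n,p}_t\\
&\quad + 2\,\E\E^m\Int_t^T (Y^n_s-Y^p_s)^\top\big(dK^n_s-dK^p_s\big),
\end{align*}
where $f^n_s:=f(s,B_s,Y^n_s,Z^n_s)$ and similarly for $h^n_s,g^n_s$, and $G^{n,p}_t$ collects the $\Frac12(g^n-g^p)\ast dB$ contribution, to be rewritten through the divergence representation \reff{forward-backward}.

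First I would dispose of the coefficient terms using the Lipschitz bounds of Assumption \ref{assgener}(ii). The $f$-term is handled by $2ab\le\delta a^2+\delta^{-1}b^2$, producing a remainder $C_\delta\,\E\E^m\Int_t^T|Y^n_s-Y^p_s|^2\,ds$ plus a small $\delta\,\E\E^m\Int_t^T\|Z^n_s-Z^p_s\|^2\,ds$. The $h$-term gives $\E\E^m\Int_t^T\|h^n_s-h^p_s\|^2\,ds\le\E\E^m\Int_t^T(c|Y^n_s-Y^p_s|+\beta\|Z^n_s-Z^p_s\|)^2\,ds$, and the $G^{n,p}_t$ term, after the divergence rewriting, is estimated by the Lipschitz-$\alpha$ bound. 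Collecting the coefficients of $\E\E^m\Int_t^T\|Z^n_s-Z^p_s\|^2\,ds$ on the right, the combined constant is of order $2\alpha+\beta^2$, so the \emph{contraction} condition $\alpha+\Frac{\beta^2}{2}<\Frac12$ of Assumption \ref{assgener}(iii) is precisely what guarantees $2\alpha+\beta^2<1$; after absorption a strictly positive multiple of $\E\E^m\Int_t^T\|Z^n_s-Z^p_s\|^2\,ds$ survives on the left while only a $\E\E^m\Int_t^T|Y^n_s-Y^p_s|^2\,ds$ remainder is left on the right, ready for a backward Gronwall argument.

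The main obstacle is the penalization cross term, with $dK^n_s=-n(Y^n_s-\pi(Y^n_s))\,ds$. Here I would exploit the convexity of $D$. Writing $Y^n_s-Y^p_s=(Y^n_s-\pi(Y^p_s))+(\pi(Y^p_s)-Y^p_s)$ and applying property \reff{prop1} with $\pi(Y^p_s)\in\bar D$ gives $(Y^n_s-\pi(Y^p_s))^\top(Y^n_s-\pi(Y^n_s))\ge0$, hence $\Int_t^T(Y^n_s-\pi(Y^p_s))^\top dK^n_s\le0$, and one is left with $\Int_t^T(\pi(Y^p_s)-Y^p_s)^\top dK^n_s\le\underset{0\leq s\leq T}{\Sup} d(Y^p_s,D)\,\Int_t^T|dK^n_s|$; symmetrically for the $dK^p$ piece. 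Thus the cross term is dominated by $\underset{0\leq s\leq T}{\Sup} d(Y^p_s,D)\,\Int_0^T n|Y^n_s-\pi(Y^n_s)|\,ds+\underset{0\leq s\leq T}{\Sup} d(Y^n_s,D)\,\Int_0^T p|Y^p_s-\pi(Y^p_s)|\,ds$. To close I need a \emph{uniform} bound on the total variation of $K^n$; this is where property \reff{prop3} and the hypothesis $0\in D$ enter, since $\gamma\,\Int_0^T n|Y^n_s-\pi(Y^n_s)|\,ds\le\Int_0^T n(Y^n_s)^\top(Y^n_s-\pi(Y^n_s))\,ds=-\Int_0^T(Y^n_s)^\top dK^n_s$, and an It\^o expansion of $|Y^n_s|^2$ together with the $\mathbf L^4$-integrability of $\Phi$ (Assumption \ref{assxi}) yields $\sup_n\E\E^m\big[(\Int_0^T|dK^n_s|)^2\big]<\infty$. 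Combining this with Lemma \ref{fundamental:lemma}, which forces $\E\E^m\big[\underset{0\leq s\leq T}{\Sup} d(Y^p_s,D)^2\big]\to0$, a Cauchy--Schwarz step shows the whole cross term tends to $0$ as $n,p\to\infty$.

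Finally I would feed these estimates into a backward Gronwall inequality for $t\mapsto\E\E^m|Y^n_t-Y^p_t|^2$, and then take the supremum over $t$ in the pre-expectation identity, using a Burkholder--Davis--Gundy step on the martingale parts (as already done in Lemma \ref{fundamental:lemma}) to control $\E\E^m\big[\underset{0\leq t\leq T}{\Sup}|Y^n_t-Y^p_t|^2\big]$; together with the surviving $Z$-energy this gives exactly \reff{conv}. The delicate interplay to watch is between the contraction constant of Assumption \ref{assgener}(iii), which alone keeps the $Z$-energy on the correct side, and the projection inequalities \reff{prop1}--\reff{prop3}, which alone convert the a priori distance estimates of Lemmas \ref{lem} and \ref{fundamental:lemma} into a vanishing bound on the reflection term.
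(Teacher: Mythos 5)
Your proposal is correct and follows essentially the same route as the paper: the extended double stochastic It\^o formula applied to $|Y^n_t-Y^p_t|^2$, absorption of the $f$-, $h$- and $g$-terms using the contraction condition $2\alpha+\beta^2<1$, domination of the reflection cross term by $\underset{0\leq s\leq T}{\Sup}\, d(Y^p_s,D)\,\|K^n\|_{VT}$ (plus the symmetric piece), the uniform $L^2$ bound on the total variation of $K^n$ (the paper's Lemma \ref{extraestimate}, whose proof via \reff{prop3}, $0\in D$ and the $L^4$ estimates you sketch exactly), Lemma \ref{fundamental:lemma} to make the bound vanish, and finally Gronwall plus Burkholder--Davis--Gundy. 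The only cosmetic deviation is that you re-derive the needed convexity inequality from \reff{prop1} via the decomposition $Y^n_s-Y^p_s=(Y^n_s-\pi(Y^p_s))+(\pi(Y^p_s)-Y^p_s)$, whereas the paper invokes \reff{prop2} directly, which is the same fact.
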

\begin{proof}
For all $n,p\geq 0$, we apply It\^o formula to $|Y_t^n-Y_t^p|^2$
\be \label{itoexis}
\begin{split}
|Y_t^n-Y_t^p|^2&+\Int_t^T \|Z_s^n-Z_s^p\|^2ds  =  2\Int_t^T (Y_s^n-Y_s^p)^\top(f(s,B_s,Y_s^n,Z_s^n)-f(s,B_s,Y_s^p,Z_s^p))ds\\
& +2\Int_t^T (Y_s^n-Y_s^p)^\top(h(s,B_s,Y_s^n,Z_s^n)-h(s,B_s,Y_s^p,Z_s^p))d\W_s - 2\Int_t^T (Y_s^n-Y_s^p)(Z_s^n-Z_s^p)dB_s\\
&+\Int_t^T(Y_s^n-Y_s^p)^\top(g(s,B_s,Y_s^n,Z_s^n)-g(s,B_s,Y_s^p,Z_s^p))\ast dB_s\\
&-2\Int_t^Ttrace[(Z_s^n-Z_s^p)^\top(g(s,B_s,Y_s^n,Z_s^n)-g(s,B_s,Y_s^p,Z_s^p))]ds\\
& +\Int_t^T \|h(s,B_s,Y_s^n,Z_s^n)-h(s,B_s,Y_s^p,Z_s^p)\|^2ds- 2n\Int_t^T (Y_s^n-Y_s^p)^\top(Y_s^n-\pi(Y_s^n))ds\\
&+2p\Int_t^T (Y_s^n-Y_s^p)^\top(Y_s^p-\pi(Y_s^p))ds.
\end{split}
\ee
Taking into account the assumptions on $g$ and the inequality $2ab\leq \epsilon a^2+ \epsilon^{-1} b^2$, for all $\epsilon >0$, we have
\begin{align}\label{2.10}
\begin{split}
-2\Int_t^T trace[(Z_s^n-Z_s^p)^\top(g(s,B_s,Y_s^n,Z_s^n)&-g(s,B_s,Y_s^p,Z_s^p))]ds\\
& \leq 2\Int_t^T \|Z_s^n-Z_s^p\|\Big(C|Y_s^n-Y_s^p| + \alpha\|Z_s^n-Z_s^p\|\Big)ds\\
& \leq \epsilon^{-1}\Int_t^T |Y_s^n-Y_s^p|^2ds + (2\alpha+\epsilon)\Int_t^T\|Z_s^n-Z_s^p\|^2ds
\end{split}
\end{align}
By the property (\ref{prop2}), we have
\begin{eqnarray}
\begin{split}
- 2n\Int_t^T (Y_s^n-Y_s^p)^\top(Y_s^n-\pi(Y_s^n))ds &+ 2p\Int_t^T (Y_s^n-Y_s^p)^\top(Y_s^p-\pi(Y_s^p))ds\\
&\leq 2(n+p)\Int_t^T (Y_s^n-\pi(Y_s^n))^\top(Y_s^p-\pi(Y_s^p))ds.\\
&
\end{split}
\end{eqnarray}
Hence, from the Lipschitz continuity on $f$ and $h$, and taking expectation yields
\begin{eqnarray}\label{itoestimate}
\begin{split}
\E\E^m[|Y_t^n-Y_t^p|^2] &+ \E\E^m[\Int_t^T \|Z_s^n-Z_s^p\|^2ds]\leq  2\E\E^m[\Int_t^T C(|Y_s^n-Y_s^p|^2+ |Y_s^n-Y_s^p|\|Z_s^n-Z_s^p\|)ds]\\
&+\E\E^m[\Int_t^T \big((C+\epsilon^{-1})|Y_s^n-Y_s^p|^2+ (2\alpha+\epsilon+\beta^2)\|Z_s^n-Z_s^p\|^2\big)ds]\\
&+ 2(n+p)\E\E^m[\Int_t^T (Y_s^n-\pi(Y_s^n))^\top(Y_s^p-\pi(Y_s^p))ds].
\end{split}
\end{eqnarray}
For the last term, we need the following lemma whose proof is postponed to the Appendix.
\begin{Lemma}\label{extraestimate}
There exists a constant $C>0$ such that, for each $n\geq 0$,
\be
 \E\E^m\big[\Big(n\Int_0^T d(Y_s^n,D) ds\Big)^2\big]\leq C.
 \label{boundvariation}
\ee
\end{Lemma}
\vspace{0.5cm}
\noindent Now we can deduce from the H\"older inequality and Lemma \ref{extraestimate} that
\be \label{estdistance}
\begin{split}
n\E\E^m[\Int_t^T &(Y_s^n-\pi(Y_s^n))^\top(Y_s^p-\pi(Y_s^p))ds] \leq  n\E\E^m[\Int_t^T  d(Y_s^n,D)d(Y_s^p,D))ds]\\
&\leq n\E\E^m[\underset{0\leq s\leq T}{\Sup}d(Y_s^p,D)\Int_t^T  d(Y_s^n,D)ds)]\\
 &\leq \Big(\E\E^m\big[\Big(n\Int_0^T d(Y_s^n,D) ds\Big)^2\big]\Big)^{1/2} \Big(\E\E^m[\underset{0\leq s\leq T}{\Sup}d^2(Y_s^p,D)]\Big)^{1/2}\\
&\leq C\Big(\E\E^m[\underset{0\leq s\leq T}{\Sup}d^2(Y_s^p,D)]\Big)^{1/2}.
\end{split}
\ee
Substituting (\ref{estdistance}) in the previous inequality \eqref{itoestimate}, we have
\b*
\begin{split}
\E\E^m[|Y_t^n-Y_t^p|^2] &+ (1-\beta^2-2 \alpha-C\epsilon)\E\E^m[\Int_t^T \|Z_s^n-Z_s^p\|^2ds]\leq  C(1+\epsilon^{-1})\E\E^m[\Int_t^T |Y_s^n-Y_s^p|^2ds]\\
&+ C\Big(\E\E^m[\underset{0\leq s\leq T}{\Sup}d^2(Y_s^n,D)]\Big)^{1/2}+ C\Big(\E\E^m[\underset{0\leq s\leq T}{\Sup}d^2(Y_s^p,D)]\Big)^{1/2}.
\end{split}
\e*
Choosing $1-\beta^2-2 \alpha-C\epsilon>0$, by Gronwall's lemma, we obtain
\be
\underset{0\leq t\leq T}{\Sup}\E\E^m[|Y_t^n-Y_t^p|^2]\leq C\Big(\E\E^m[\underset{0\leq s\leq T}{\Sup}d^2(Y_s^p,D)]\Big)^{1/2}+ C\Big(\E\E^m[\underset{0\leq s\leq T}{\Sup}d^2(Y_s^n,D)]\Big)^{1/2}.
\ee
We deduce similarly
\be
 \E\E^m[\Int_0^T \|Z_s^n-Z_s^p\|^2ds]\leq C\Big(\E\E^m[\underset{0\leq s\leq T}{\Sup}d^2(Y_s^p,D)]\Big)^{1/2}+ C\Big(\E\E^m[\underset{0\leq s\leq T}{\Sup}d^2(Y_s^n,D)]\Big)^{1/2}.
\label{estunifZ}
\ee
Next, by \eqref{itoexis}, the Burkholder-Davis-Gundy inequality and the previous calculations we get
\begin{align*}
&\E\E^m[\underset{0\leq t\leq T}{\Sup}|Y_t^n-Y_t^p|^2]\leq C\E\E^m[\Int_0^T | Y_s^n-Y_s^p||f(s,Y_s^n,Z_s^n)-f(s,Y_s^p,Z_s^p)|ds]\\
&+C\E\E^m\big(\Int_0^T | Y_s^n-Y_s^p|^2\|h(s,B_s,Y_s^n,Z_s^n)-h(s,B_s,Y_s^p,Z_s^p)\|^2ds\big)^{1/2}+C\E\E^m\big(\Int_0^T | Y_s^n-Y_s^p|^2\| Z_s^n-Z_s^p\|^2ds\big)^{1/2}\\
&+C\E\E^m\big(\Int_0^T | Y_s^n-Y_s^p|^2\|g(s,B_s,Y_s^n,Z_s^n)-g(s,B_s,Y_s^p,Z_s^p)\|^2ds\big)^{1/2}\\
&+\E\E^m[\Int_0^T C(|Y_s^n-Y_s^p|^2+ \alpha\|Z_s^n-Z_s^p\|^2)ds]+  2(n+p)\E[\Int_0^T (Y_s^n-\pi(Y_s^n))^\top(Y_s^p-\pi(Y_s^p))ds].
\end{align*}
Then, it follows by the Lipschitz Assumption \ref{assgener} on $f$, $g$ and $h$ and  (\ref{estdistance}) that for any $n,p\geq 0$
\b*
\begin{split}
\E\E^m[\underset{0\leq t\leq T}{\Sup}|Y_t^n-Y_t^p|^2]&\leq  C\Big(\E\E^m[\underset{0\leq s\leq T}{\Sup}d^2(Y_s^p,D)]\Big)^{1/2}+C\Big(\E\E^m[\underset{0\leq s\leq T}{\Sup}d^2(Y_s^n,D)]\Big)^{1/2}\\
&+ C\E\E^m(\underset{0\leq t\leq T}{\Sup}|Y_t^n-Y_t^p|^2\Int_0^T \|Z_s^n-Z_s^p\|^2ds)^{1/2}\\
&\leq  C\Big(\E\E^m[\underset{0\leq s\leq T}{\Sup}d^2(Y_s^p,D)]\Big)^{1/2}+C\Big(\E\E^m[\underset{0\leq s\leq T}{\Sup}d^2(Y_s^n,D)]\Big)^{1/2}\\
&+ C\varepsilon\E\E^m(\underset{0\leq t\leq T}{\Sup}|Y_t^n-Y_t^p|^2)+ C\varepsilon^{-1}\E\E^m(\Int_0^T \|Z_s^n-Z_s^p\|^2ds).
\end{split}
\e*
Choosing $1-C\varepsilon>0$ and from the inequality (\ref{estunifZ}) we conclude that
\b*
\begin{split}
\E\E^m[\underset{0\leq t\leq T}{\Sup}|Y_t^n-Y_t^p|^2] &\leq C\Big(\E\E^m[\underset{0\leq s\leq T}{\Sup}d^2(Y_s^p,D)]\Big)^{1/2}+C\Big(\E\E^m[\underset{0\leq s\leq T}{\Sup}d^2(Y_s^n,D)]\Big)^{1/2}\\
&\leq C\Big(\E\E^m[\underset{0\leq s\leq T}{\Sup}d^4(Y_s^p,D)]\Big)^{1/4}+C\Big(\E\E^m[\underset{0\leq s\leq T}{\Sup}d^4(Y_s^n,D)]\Big)^{1/4}\longrightarrow 0,
\end{split}
\e*
as $n,m\rightarrow \infty$, where Lemma \ref{fundamental:lemma} has been used.\ep
\end{proof}
\vspace{0.6cm}
Consequently, since for any $n,p\geq 0$ and $0\leq t\leq T$,
\be
\begin{split}
K_s^{n}-K_s^{p} &= Y_0^{n}-Y_0^{p}-Y_s^{n}-Y_s^{p}-\Int_0^s (f(r,Y_r^{n},Z_r^{n})-f(r,Y_r^{p},Z_r^{p}))dr\\
&-\Int_0^s (h(r,Y_r^{n},Z_r^{n})-h(r,Y_r^{p},Z_r^{p}))d\W_r
 +\Int_0^s (Z_r^{n},-Z_r^{p}) dB_r.
\end{split}
\ee
we obtain from \eqref{conv} and Burkholder-Davis-Gundy inequality,
\be\label{convK}
\E\E^m(\underset{0\leq s\leq T}{\Sup} |K_s^{n}-K_s^{p}|^2)\rightarrow 0 \quad \mbox{as}~~ n, p\rightarrow\infty.
\ee
We have also the following result:
\begin{Lemma} There exists a $\Fc_s$ measurable triple processes $(Y_s,Z_s,K_s)_{s\in[0,T]}$ such that
\be
\E\E^m\big(\underset{0\leq s\leq T}{\Sup} |Y_s^{n}-Y_s|^2 + \Int_0^T |Z_s^{n}-Z_s|^2 ds+\underset{0\leq s\leq T}{\Sup} |K_s^{n}-K_s|^2\big) \rightarrow 0 \quad \mbox{as}~~ n\rightarrow\infty.\nonumber\\
\quad
\label{converg}
\ee
Moreover, this triple of processes is a solution of the following RBDSDE:
\begin{equation}
\label{RBDSDE}
 \left\lbrace
\begin{aligned}
&(i)~ Y_{s}
 =\Phi(B_{T})+
\Int_{s}^{T}f(r,B_{r},Y_{r},Z_{r})dr+\Int_{s}^{T}h(r,B_{r},Y_{r},Z_{r})d\W_r+K_{T}-K_{s}\\
&\hspace{2.5cm}
 +\frac{1}{2}\Int_t^T g(r,B_{r},Y_{r},Z_{r})\ast dB_r-\Int_{s}^{T}Z_{r}dB_{r},\;
\P\otimes \P^m\text{-}a.s. , \; \forall \,  s \in [t,T]  \\
& (ii)~ Y_{s} \in \bar{D} \, \, \quad \P\otimes \P^m\text{-}a.s.\\
& (iii) \Int_0^T (Y_{s}-v_s(B_{s}))^\top dK_{s}\leq 0., \,\P\otimes \P^m\text{-}a.s., \\
&~ \text{for any continuous }\, \Fc_s -\text{random function}\, v \, : \,[0,T] \times \Omega \times \mathbb R^d \longrightarrow \,  \bar{D}.
\end{aligned}
\right.
\end{equation}
\end{Lemma}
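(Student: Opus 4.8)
The plan is to build the limit triple by completeness and then to identify it as a solution of the reflected BDSDE of the statement by passing to the limit, term by term, in the penalized equation \eqref{BDSDEpen}.

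\emph{Construction of the limit.} Lemma \ref{lemma-conv} shows that $(Y^n,Z^n)$ is a Cauchy sequence in the complete spaces $\Sc^2_k([0,T])$ and $\Hc^2_{k\times d}([0,T])$, and \eqref{convK} shows that $(K^n)$ is Cauchy for the norm $\E\E^m[\Sup_{0\le s\le T}|\cdot|^2]^{1/2}$. Hence limits $Y,Z,K$ exist and \eqref{converg} holds. Each of $Y^n_s,Z^n_s,K^n_s$ being $\Fc_s$-measurable, the $L^2$-limits are $\Fc_s$-measurable after choosing suitable versions, while $Y$ and $K$ inherit continuous paths as uniform limits of continuous processes. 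Finally, since $K^n$ has total variation $n\Int_0^T d(Y^n_s,D)\,ds$, which is bounded in $L^2$ by Lemma \ref{extraestimate}, lower semicontinuity of the total variation shows that $K$ has bounded variation, so that $dK$ defines a signed random measure.

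\emph{Passing to the limit in the dynamics.} I would let $n\to\infty$ in each term of \eqref{BDSDEpen}. By the Lipschitz Assumption \ref{assgener}(ii) and \eqref{converg}, the terms $f(\cdot,Y^n,Z^n)$, $g(\cdot,Y^n,Z^n)$ and $h(\cdot,Y^n,Z^n)$ converge to $f(\cdot,Y,Z)$, $g(\cdot,Y,Z)$ and $h(\cdot,Y,Z)$ in $\mathbf L^2([0,T]\times\Omega)$; the drift term then converges, the backward integral $\Int_s^T h\,d\W_r$ and the forward integral $\Int_s^T Z^n_r\,dB_r$ converge by the respective It\^o isometries, and $K^n_T-K^n_s\to K_T-K_s$. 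The delicate term is the forward--backward integral $\frac12\Int_s^T g(r,B_r,Y^n_r,Z^n_r)\ast dB_r$: writing it via its definition as a sum of a forward and a backward It\^o integral of the field $g(r,\cdot,u^n(r,\cdot),\nabla u^n(r,\cdot))$ and invoking the associated $L^2$ estimate under $\P\otimes\P^m$, its convergence again follows from the $\mathbf L^2$-convergence of $g(\cdot,Y^n,Z^n)$. This yields relation (i).

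\emph{Constraint and reflection.} By Lemma \ref{fundamental:lemma}, $\E\E^m[\Sup_{0\le t\le T}d(Y^n_t,D)^4]\to 0$; since $d(\cdot,D)$ is $1$-Lipschitz and $Y^n\to Y$ uniformly, $d(Y_s,D)=0$, i.e. $Y_s\in\bar D$, which is (ii). For the Skorohod condition (iii), note from \eqref{representation} that $dK^n_s=-n(Y^n_s-\pi(Y^n_s))\,ds$; for any continuous $\bar D$-valued random function $v$, property \eqref{prop1} applied with $x=Y^n_s$ and $x'=v_s(B_s)\in\bar D$ gives $(Y^n_s-v_s(B_s))^\top(Y^n_s-\pi(Y^n_s))\ge 0$, whence $\Int_0^T (Y^n_s-v_s(B_s))^\top dK^n_s\le 0$ for every $n$. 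I then pass to the limit through the splitting $\Int_0^T (Y^n_s-v_s(B_s))^\top dK^n_s-\Int_0^T (Y_s-v_s(B_s))^\top dK_s=\Int_0^T (Y^n_s-Y_s)^\top dK^n_s+\Int_0^T (Y_s-v_s(B_s))^\top d(K^n_s-K_s)$: the first piece is bounded by $\Sup_s|Y^n_s-Y_s|$ times the total variation of $K^n$ and vanishes by Cauchy--Schwarz together with Lemma \ref{extraestimate} and \eqref{converg}, while the second vanishes because the uniformly bounded total variations and the uniform convergence $K^n\to K$ yield weak convergence of $dK^n$ to $dK$ tested against the continuous integrand $s\mapsto Y_s-v_s(B_s)$. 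Letting $n\to\infty$ in $\Int_0^T (Y^n_s-v_s(B_s))^\top dK^n_s\le0$ then gives (iii).

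The main obstacle is the reflection inequality (iii): it is not stable under mere $L^2$-convergence of the integrators $K^n$, and one genuinely needs the a priori total-variation bound of Lemma \ref{extraestimate} to upgrade the uniform convergence of $K^n$ into weak convergence of the random measures $dK^n$, the integrand being only continuous. A secondary difficulty is the forward--backward integral term in the dynamics, whose passage to the limit relies on the specific isometry of the $\ast dB$ integral under $\P\otimes\P^m$ rather than on a classical martingale inequality.
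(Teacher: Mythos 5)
Your construction of the limit triple, the term-by-term passage to the limit in \eqref{BDSDEpen}, the verification of the constraint $Y_s\in\bar D$ via Lemma \ref{fundamental:lemma}, and the inequality $\Int_0^T (Y^n_s-v_s(B_s))^\top dK^n_s\le 0$ obtained from \eqref{prop1} all coincide with the paper's proof. The genuine gap is in your treatment of the second piece of the splitting for the Skorohod condition, namely the claim that ``the uniformly bounded total variations and the uniform convergence $K^n\to K$ yield weak convergence of $dK^n$ to $dK$'' against the continuous integrand. Lemma \ref{extraestimate} only gives $\sup_n \E\E^m\big[\|K^n\|_{VT}^2\big]\le C$, i.e.\ a bound in $L^2(\Omega)$; the pathwise Helly--Bray argument you are invoking requires $\sup_n\|K^n\|_{VT}<\infty$ almost surely (at least along the subsequence on which $K^n\to K$ uniformly a.s.), and this does \emph{not} follow from an $L^2$ bound: an i.i.d.\ sequence with tails $\P(X>x)=x^{-3}$ is bounded in $L^2$ yet satisfies $\sup_k X_{n_k}=\infty$ a.s.\ along \emph{every} subsequence, by the second Borel--Cantelli lemma. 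So, as written, the step asserting weak convergence of the random measures $dK^n$ fails.

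The paper closes exactly this hole with an argument you are missing: from the It\^o computation of Lemma \ref{estapriori} one gets the \emph{pathwise} inequality $2\gamma\|K^n\|_{VT}\le R_n$, where $R_n$ is an explicit expression in $(Y^n,Z^n)$ that converges in probability to the corresponding expression in $(Y,Z)$; extracting a subsequence along which $R_{\phi(n)}$ converges a.s.\ yields $\sup_n\|K^{\phi(n)}\|_{VT}<\infty$ a.s., a further extraction makes $Y^{\phi(\psi(n))}\to Y$ uniformly a.s., and then Lemma 5.8 of \cite{GP} gives the a.s.\ convergence of the Stieltjes integrals, after which the inequality passes to the limit. Alternatively, your route could be repaired without this pathwise bound by a quantitative truncation: work on the event $\{\|K^n\|_{VT}\le M\}$, whose complement has probability at most $C/M^2$ by Chebyshev and Lemma \ref{extraestimate}, approximate the continuous integrand by step functions, and conclude convergence \emph{in probability} of $\Int_0^T (Y_s-v_s(B_s))^\top d(K^n_s-K_s)$, which suffices to pass the sign condition to the limit. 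Either way, a substantive additional step beyond what you wrote is required; the rest of your proof matches the paper.
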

\begin{proof}
First, we have from \eqref{conv} that $(Y^{n},Z^{n})$ is a Cauchy sequence in ${\Sc}^2_k([0,T])\times {\Hc}^2_{k\times d}([0,T])$ and therefore there exists a unique pair $(Y_s,Z_s)$  of $\Fc_s$- measurable processes which valued in $\R^k\times\R^{k\times d}$, satisfying
\be
\E\E^m(\underset{0\leq s\leq T}{\Sup} |Y_s^{n}-Y_s|^2 + \Int_0^T |Z_s^{n}-Z_s|^2 ds) \rightarrow 0 \quad \mbox{as}~~ n\rightarrow\infty.
\label{ConvYZ}
\ee
Similarly, we obtain from \eqref{convK} there exists a $\Fc_s$- adapted continuous process $(K_s)_{0\leq s\leq T}$ ( with $K_0=0$) such that $$\E\E^m(\underset{0\leq s\leq T}{\Sup} |K_s-K_s^{n}|^2 )\rightarrow 0 \quad \mbox{as}~~ n\rightarrow\infty.$$
Furthermore, \eqref{estunif} shows that the total variation of $K^{n}$ is uniformly bounded. Thus, $K$ is also of uniformly bounded variation.
Passing to the limit in \eqref{BDSDEpen}, the processes $(Y_s,Z_s,K_s)_{0\leq s\leq T}$ satisfy
\be
Y_{s}
 &=&\Phi(B_{T})+
\Int_{s}^{T}f(r,B_{r},Y_{r},Z_{r})dr+\Int_{s}^{T}h(r,B_{r},Y_{r},Z_{r})d\W_r+K_{T}-K_{s}\nonumber\\
 &+&\frac{1}{2}\Int_t^T g(r,B_{r},Y_{r},Z_{r})\ast dB_r-\Int_{s}^{T}Z_{r}dB_{r},\;
\P\otimes \P^m\text{-}a.s. , \; \forall \,  s \in [t,T]
\ee
Since we have from Lemma \ref{fundamental:lemma} that $Y_s$ is in $\bar{D}$, it remains to check the minimality property for $ (K_s)$, namely i.e., for any continuous $\Fc_s $- random function $ v$ valued in $\bar{D}$, $$\Int_0^T (Y_s-v_s(B_{s}))^\top dK_s \leq 0.$$
We note that \eqref{prop1} gives us
$$\Int_0^T (Y_s^{n}-v_s(B_{s}))^\top dK_s^{n}= -n\Int_0^T (Y_s^{n}-v_s(B_{s})^\top (Y_s^{n}-\pi(Y_s^{n}))ds \leq 0.$$
Therefore, we will show that we can extract a subsequence such that $\Int_0^T (Y_s^{n}-v_s(B_{s}))^\top dK_s^{n}$ converge a.s. to $\Int_0^T (Y_s-v_s(B_{s}))^\top dK_s.$ Following the proof of Lemma \ref{estapriori} in Appendix, we have
\begin{align*}
2\gamma \|K^{n}\|_{VT}&\leq |\Phi(B_{T})|^2 + 2\Int_0^T (Y_s^n)^\top f(s,B_s,Y_s^n,Z_s^n)ds +2\Int_0^T (Y_s^n)^\top h(s,B_s,Y_s^n,Z_s^n)d\W_s \\
&- 2\Int_0^T (Y_s^n)Z_s^ndB_s+\Int_0^T(Y_s^n)^\top g(s,B_s,Y_s^n,Z_s^n)\ast dB_s\\
&-2\Int_0^T trace[(Z_s^n)^\top g(s,B_s,Y_s^n,Z_s^n)]ds +\Int_0^T \|h(s,B_s,Y_s^n,Z_s^n)\|^2ds.
\end{align*}
Notice that the right hand side tends in probability as $n$ goes to infinity  to
\begin{align*}
&|\Phi(B_{T})|^2 + 2\Int_0^T (Y_s)^\top f(s,B_s,Y_s,Z_s)ds +2\Int_0^T (Y_s)^\top h(s,B_s,Y_s,Z_s)d\W_s
- 2\Int_0^T (Y_s)Z_sdB_s\\
&+\Int_0^T(Y_s)^\top g(s,B_s,Y_s,Z_s)\ast dB_s
-2\Int_0^T trace[(Z_s)^\top g(s,B_s,Y_s,Z_s)]ds +\Int_0^T \|h(s,B_s,Y_s,Z_s)\|^2ds.
\end{align*}
Thus, there exists a subsequence $(\phi(n))_{n\geq 0}$ such that the convergence is almost surely and $\|K^{\phi(n)}\|_{VT}$ is bounded. Moreover, due to the convergence in $\L^2$ of $\underset{0\leq s\leq T}{\Sup} |Y_s^{n}-Y_s|^2$ to $0$, we can extract a subsequence  from $(\phi(n))_{n\geq 0}$ such that $Y^{\phi(\psi(n))}$ converges uniformly to $Y$. Hence, we apply  Lemma 5.8 in \cite{GP} and we obtain
$$\Int_0^T(Y_s^{\phi(\psi(n))}-v_s(B_{s}))^\top dK_s^{\phi(\psi(n)))}\longrightarrow\Int_0^T(Y_s -v_s(B_{s}))^\top dK_s\quad a.s. ~~\mbox{as}~~ n\rightarrow\infty$$
which is  the required result.
\ep
\end{proof}
\vspace{0.5cm}
\noindent We remind that the purpose of this section is to prove that the penalized solution $(u^n)_n$ is a Cauchy sequence. By all the calculations done before we obtain:
\begin{eqnarray*}
&&\E[\underset{0\leq t \leq T}{\Sup}\|u^{n}(t)-u^{p}(t)\|_2 ^{2}+\int_0^T\|u^{n}(t)-u^{p}(t)\|^2dt]\\
&=&\E[\underset{0\leq t \leq T}{\Sup} \Int_{\mathbb{R}^{d}}\left|
u^{n}(t,x)-u^{p}(t,x)\right| ^{2}dx+\int_0^T\Int_{\mathbb{R}^{d}}\left| (\nabla u^{n})(s,x)-(\nabla u^{p})(s,x)\right| ^{2} dsdx] \\
&=&\E[\underset{0\leq t \leq T}{\Sup} \E^m[\left|
Y^{n}(t)-Y^{p}(t)\right| ^{2}]+\E^m[\int_0^T\|Z^{n}_s)-Z^{p}_s\| ^{2} ds]] \\
&\leq &\E\E^m[\underset{0\leq t \leq T}{\Sup}\left|
Y_{s}^{n}-Y_{s}^{p}\right| ^{2}+\Int_{0}^{T}\left\|Z_{s}^{n}-Z_{s}^{p}\right\| ^{2}ds] \longrightarrow 0.
\end{eqnarray*}
Therefore $(u_{n})_{n\in\mathbb N}$ is a Cauchy sequence in $\mathcal{H}_T$, and the limit $%
u=\underset{n\rightarrow \infty }{\lim}u_{n}$ belongs to  $\mathcal{H}_T$.\\

Denote $\nu _{n}(dt,dx)= -n(u_{n}-\pi(u_{n}))(t,x)dtdx$. Then by (\ref{estimateL4-1})  we have
\begin{eqnarray}\label{totalvariation}
&&\underset{n}{\Sup}\,\E\big[Var(\nu _{n})([0,T]\times \mathbb{R}^{d})^2\big] \nonumber\\
&=&\underset{n}{\Sup}\E[\left (n\Int_{0}^{T}\Int_{\R^d}|(u_{n}-\pi(u_{n}))(s,x)|dsdx\right )^2]<\infty ,
\end{eqnarray}
where $Var(\nu_{n})$ denotes the total variation of $\nu_n$ on $Q_T=[0,T]\times \R^d$.  Let ${\cal M}({Q}_T)$ denote the Banach space of totally finite signed measures on $Q_T$ ($\R^k$-valued), equipped with the norm of total variation. (\ref{totalvariation}) implies that $\{\nu_n(dt,dx), n\geq 1\}$ is bounded in $L^2(\Omega, {\cal M}({Q}_T))$, hence  relatively compact with respect to the weak$^*$ topology in
  $L^2(\Omega, {\cal M}({Q}_T))$. Thus, we may assume ( take a subsequence if necessary) that $\nu_n$  converges to some random signed measure  $\nu\in L^2(\Omega, {\cal M}({Q}_T))$ with respect to the weak$^*$ topology. Moreover, it follows from (\ref{totalvariation}) that $\E[(Var(\nu)([0,T]\times \R^d))^2]<\infty$ for every $T>0$.
Hence, for $\varphi \in \mathcal{D}_T$ with compact support in $x$,
$$
\Int_{\mathbb{R}^{d}}\Int_{t}^{T}\varphi d\nu _{n} \rightarrow \Int_{\mathbb{R}^{d}}\Int_{t}^{T}\varphi d\nu
$$
weakly in $L^2(\Omega)$.
Now passing to the limit in the SPDE $(\Phi,f^{n},g,h)$ (\ref{o-equa1}), we get that $(u,\nu )$
satisfies the reflected SPDE associated to $(\Phi,f,g,h)$, i.e. for every $\varphi \in
\mathcal{D}_T$, we have
\begin{align} \label{equa1}
 \nonumber\Int_{t}^{T}&\big[(u(s,\cdot),\partial
_{s}\varphi(s,\cdot) )+\Frac{1}{2}(\nabla u(s,\cdot),\nabla\varphi(s,\cdot))\big]ds +(u(t,\cdot ),\varphi
(t,\cdot ))-(\Phi(\cdot ),\varphi (T,\cdot))\\
\nonumber &=\Int_{t}^{T}\big[(f(s,\cdot,u^{n}(s,\cdot),\nabla u(s,\cdot)),\varphi(s,\cdot))+(g(s,\cdot,u(s,\cdot),\nabla u(s,\cdot)),\varphi(s,\cdot))\big]ds\\
&+\Int_{t}^{T}(h(s,\cdot,u(s,\cdot),\nabla u(s,\cdot)),\varphi(s,\cdot))d\W_s+\Int_{t}^{T}\Int_{\mathbb{R}^{d}}\varphi (s,x)\nu (ds,dx).
\end{align}

\vspace{0.3cm}
\noindent We can now deduce following the probabilistic
interpretation (Feymamn-Kac's formula) for the measure $\nu $ via
the nondecreasing process $K^{t,x}$ of the RBDSDE \eqref{RBDSDE}.
\begin{Lemma}
We have
$$u(t,x)\in \bar{D}, \quad dx\otimes dt\otimes d\P-a.e., \quad\quad \nu
(ds,dx)=1_{\{u\in\partial D\}}(s,x)\nu (ds,dx). $$
Furthermore, for every measurable bounded and positive functions $\varphi $ and $\psi $,
\begin{align}
\Int_{\mathbb{R}^{d}}\Int_{t}^{T}\varphi (s,B^{-1}_{s})\psi (s,x)\nu^i (ds,dx)=\Int_{\mathbb{R}^{d}}\Int_{t}^{T}\varphi (s,x)\psi (s,B_{s})dK_{s}^{i}dx\text{, a.s..}
\label{con-k}
\end{align}
\end{Lemma}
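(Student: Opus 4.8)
The plan is to establish all three assertions first at the penalized level, where $\nu_n$ and $K^n$ are given by explicit formulas, and then to pass to the limit. For the inclusion $u\in\bar D$ I would argue as follows: by Lemma \ref{fundamental:lemma} we have $\E\E^m[\underset{0\le s\le T}{\Sup}(d(Y^n_s,D))^4]\to 0$, and since $Y^n\to Y$ in $\Sc^2$, the limit satisfies $d(Y_s,D)=0$, i.e.\ $Y_s\in\bar D$, $\P\otimes\P^m$-a.e. Because $Y_s=u(s,B_{t,s}(x))$, $\P^m=\Pi^{-1}(dx\otimes\P^0)$, and $B_{t,s}(x)=x+(B_s-B_t)$ is a measure-preserving translation, this transfers to $u(s,x)\in\bar D$ for $dx\otimes ds\otimes d\P$-a.e.

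For the correspondence formula the crucial point is that it is an exact identity already at the penalized level. Substituting $\nu_n(ds,dx)=-n(u_n-\pi(u_n))(s,x)\,ds\,dx$ and $dK^{n,i}_s=-n\,[u^{n,i}(s,B_{t,s}(x))-\pi(u^n(s,B_{t,s}(x)))^i]\,ds$ and performing, at each fixed $s$, the change of variables $y=B_{t,s}(x)=x+(B_s-B_t)$ (Jacobian $1$, inverse $x=B^{-1}_{t,s}(y)$), I would obtain
\begin{align*}
\Int_{\R^d}\Int_t^T \varphi(s,B^{-1}_{t,s}(x))\psi(s,x)\,\nu^{n,i}(ds,dx)=\Int_{\R^d}\Int_t^T \varphi(s,x)\psi(s,B_{t,s}(x))\,dK^{n,i}_s\,dx
\end{align*}
for every $n$. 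It then remains to pass to the limit on both sides. On the right-hand side I would combine the uniform convergence $\E\E^m[\underset{0\le s\le T}{\Sup}|K^n_s-K_s|^2]\to 0$ with the uniform total-variation bound $\underset{n}{\Sup}\,\|K^n\|_{VT}<\infty$ (Lemma \ref{extraestimate}) and invoke the Stieltjes-convergence argument already used above (Lemma 5.8 in \cite{GP}) to pass $\Int_t^T\psi(s,B_{t,s}(x))\,dK^{n,i}_s\to\Int_t^T\psi(s,B_{t,s}(x))\,dK^i_s$, integrating afterwards in $x$. On the left-hand side I would use the weak$^*$ convergence $\nu_n\to\nu$ in $L^2(\Omega,{\cal M}(Q_T))$; since the integrand $\varphi(s,B^{-1}_{t,s}(x))\psi(s,x)$ is random and merely adapted, I would first treat continuous $\varphi,\psi$ by reducing to test functions against which the weak$^*$ convergence is already known (using that $B^{-1}_{t,s}$ is a translation), and then extend to bounded measurable $\varphi,\psi$ by a monotone-class argument. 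Equating the two limits yields the desired identity.

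Finally, the support property $\nu=1_{\{u\in\partial D\}}\nu$ I would read off from the minimality condition (iii) of the limiting reflected BDSDE. For a convex domain the inequality $\Int_0^T(Y_s-v_s(B_{t,s}(x)))^\top dK_s\le 0$ for all continuous $\bar D$-valued $v$ forces $dK_s$ to be carried by $\{Y_s\in\partial D\}$: wherever $Y_s$ is interior one may perturb $v$ around $Y_s$ in opposite directions to conclude that $dK_s=0$ there. Inserting the factor $1_{\{Y_s\in\partial D\}}$ into the right-hand side of the correspondence formula and transporting it back through the change of variables, under which $\{Y_s\in\partial D\}=\{u(s,B_{t,s}(x))\in\partial D\}$ corresponds to $\{u(s,x)\in\partial D\}$, then shows $\nu=1_{\{u\in\partial D\}}\nu$ since $\varphi,\psi$ are arbitrary.

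I expect the main obstacle to be the limit passage in the correspondence formula: namely the Stieltjes convergence on the right, which uniform convergence of $K^n$ does not by itself yield and which genuinely needs the total-variation bound, together with the extension of the weak$^*$ convergence of $\nu_n$ from deterministic test functions to the random, only adapted integrand $\varphi(s,B^{-1}_{t,s}(x))\psi(s,x)$ on the left.
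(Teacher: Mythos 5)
Your proposal is correct and follows essentially the same route as the paper's proof: establish the correspondence at the penalized level via the change of variables $y=B_{t,s}(x)$ with Jacobian $1$, pass to the limit on the $K^{n}$-side using the uniform convergence of $K^n$ together with the total-variation bound, and on the $\nu_n$-side using the weak$^*$ convergence (the paper, citing \cite{BCKF}, inserts a cutoff $\theta_R$ in $x$ and lets $R\to\infty$, which is the compactness issue you also need to handle), with the support property coming from the minimality condition on $K$, exactly as you argue. The only organizational difference is the final step: you extend the formula to bounded measurable test functions and take $\varphi\equiv 1$, $\psi$ replaced by $\psi\,1_{\{u\in\partial D\}}$, whereas the paper keeps continuous test functions, plugs in $\psi=1_{\{u\in\partial D\}}$ by regularization, and then strips off the composition with $B^{-1}_{t,s}$ by a separating-algebra density argument---both variants are routine and equivalent.
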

\begin{proof} Since $K^{n}$ converges to $K$ uniformly in $t$, the
measure $dK^{n}$ converges to $dK$ weakly in probability.\\
Fix two continuous functions $\varphi $, $\psi $ : $[0,T]\times \mathbb{R}%
^{d}\rightarrow \mathbb{R}^{+}$ which have compact support in $x$ and a
continuous function with compact support $\theta :\mathbb{R}^{d}\rightarrow %
\mathbb{R}^{+}$, from Bally et al \cite{BCKF} (The proof of Theorem 4), we have (see also Matoussi and Xu \cite{MX08})
\begin{eqnarray*}
&&\Int_{\mathbb{R}^{d}}\Int_{t}^{T}\varphi (s,B^{-1}_{t,s}(x))\psi (s,x)\theta (x)\nu (ds,dx) \\
&=&\lim_{n\rightarrow \infty }-\Int_{\mathbb{R}^{d}}\Int_{t}^{T}\varphi (s,
B^{-1}_{t,s}(x))\psi (s,x)\theta(x)n(u_{n}-\pi(u_{n}))(s,x)dsdx \\
&=&\lim_{n\rightarrow \infty }-\Int_{\mathbb{R}^{d}}\Int_{t}^{T}\varphi(s,x)\psi (s,B_{t,s}(x))\theta
(B_{t,s}(x))n(u_{n}-\pi(u_{n}))(t,B_{t,s}(x))dtdx \\
&=&\lim_{n\rightarrow \infty }\Int_{\mathbb{R}^{d}}\Int_{t}^{T}\varphi(s,x)\psi (s,B_{t,s}(x))\theta (B_{t,s}(x))dK_{s}^{n,t,x}dx \\
&=&\Int_{\mathbb{R}^{d}}\Int_{t}^{T}\varphi (s,x)\psi (s,B_{t,s}(x))\theta
(B_{t,s}(x))dK_{s}^{t,x}dx.
\end{eqnarray*}
We take $\theta =\theta _{R}$ to be the regularization of the indicator
function of the ball of radius $R$ and pass to the limit with $R\rightarrow
\infty $, to get that
\begin{equation}\label{con-k1}
\Int_{\mathbb{R}^{d}}\Int_{t}^{T}\varphi (s,B^{-1}_{t,s}(x))\psi (s,x)\nu (ds,dx)=\Int_{\mathbb{R}^{d}}\Int_{t}^{T}\varphi
(s,x)\psi (s,B_{t,s}(x))dK_{s}^{t,x}dx.
\end{equation}
We know that $dK_{s}^{t,x}=1_{\{Y^{t,x}_s\in\partial D\}}dK_{s}^{t,x}=1_{\{u\in\partial D\}}(s,B_{t,s}(x))dK_{s}^{t,x}$. Again by regularization procedure we can set $\psi =1_{\{u\in\partial D\}}$  in \eqref{con-k1} to obtain
\begin{align*}
\Int_{\mathbb{R}^{d}}\Int_{t}^{T}\varphi (s,B^{-1}_{t,s}(x))1_{\{u\in\partial D\}}(s,x)\nu (ds,dx)
=\Int_{\mathbb{R}^{d}}\Int_{t}^{T}\varphi(s,B^{-1}_{t,s}(x))\nu (ds,dx)\text{, a.s.}
\end{align*}
Note that the family of functions $A(\omega )=\{(s,x)\rightarrow \phi (s,
B^{-1}_{t,s}(x)):\varphi \in C_{c}^{\infty }\}$ is an algebra which
separates the points (because $x\rightarrow B^{-1}_{t,s}(x)$ is a
bijection). Given a compact set $G$, $A(\omega )$ is dense in $C([0,T]\times
G)$. It follows that $J(B^{-1}_{t,s}(x))1_{\{u\in\partial D\}}(s,x)\nu (ds,dx)=J(
B^{-1}_{t,s}(x))\nu (ds,dx)$ for almost every $\omega $. While $J(B^{-1}_{t,s}(x))>0$ for almost every $\omega $, we get $\nu
(ds,dx)=1_{\{u\in\partial D\}}(s,x)\nu (ds,dx)$, and \eqref{con-k} follows.\\
Then we get easily that $Y_{s}^{t,x}=u(s,B_{t,s}(x))$ and $Z_{s}^{t,x}=(\nabla u\sigma)(s,B_{t,s}(x))$,  in view of the convergence
results for $(Y_{s}^{n,t,x},Z_{s}^{n,t,x})$ and the flow property associated to $B$. So $u(s,B_{t,s}(x))=Y_{s}^{t,x}\in \bar{D}$. Specially for $s=t$, we
have $u(t,x)\in \bar{D}$.\ep
\end{proof}
\subsection{Proof of uniqueness}
Let $(u^1, \nu^1)$, $(u^2, \nu^2)$ be two solutions of the reflected SPDEs.  Denote by  $(Y^1,Z^1,K^1)$ and $(Y^2,Z^2,K^2)$ the associated  solutions of  the RBDSDE  \eqref{RBDSDE}. To show the uniqueness it suffices to prove $(Y^1,Z^1,K^1)=(Y^2,Z^2,K^2)$.  Applying the double stochastic It\^o's formula extended in Matoussi and Stoica (Corollay 1 and Remark 2 in \cite{MS10} p.1158) yields
\be \label{itouni}
\begin{split}
|Y_t^1-Y_t^2|^2&+\Int_t^T \|Z_s^1-Z_s^1\|^2ds  =  2\Int_t^T (Y_s^1-Y_s^2)^\top(f(s,B_s,Y_s^1,Z_s^1)-f(s,B_s,Y_s^2,Z_s^2))ds\\
& +2\Int_t^T (Y_s^1-Y_s^2)^\top(h(s,B_s,Y_s^1,Z_s^1)-h(s,B_s,Y_s^2,Z_s^2))d\W_s - 2\Int_t^T (Y_s^1-Y_s^2)(Z_s^1-Z_s^2)dB_s\\
&+\Int_t^T(Y_s^1-Y_s^2)^\top(g(s,B_s,Y_s^1,Z_s^1)-g(s,B_s,Y_s^2,Z_s^2))\ast dB_s\\
&-2\Int_t^Ttrace[(Z_s^1-Z_s^2)^\top(g(s,B_s,Y_s^1,Z_s^1)-g(s,B_s,Y_s^2,Z_s^2))]ds\\
& +\Int_t^T \|h(s,B_s,Y_s^1,Z_s^1)-h(s,B_s,Y_s^2,Z_s^2)\|^2ds+2\Int_t^T (Y_s^1-Y_s^2)^\top(dK_s^1-dK_s^2).
\end{split}
\ee
Therefore, under the minimality condition (iv) we have
\be\label{minest}
\Int_t^T (Y_s^1-Y_s^2)^\top(dK_s^1-dK_s^2)\leq 0 ,\quad \text{for all}~~ t\in[0,T].
\ee
Then, following the proof of Lemma \ref{lemma-conv} we obtain
\b*
\begin{split}
\E\E^m[|Y_t^1-Y_t^2|^2] &+ (1-\beta^2-2 \alpha-C\epsilon)\E\E^m[\Int_t^T \|Z_s^1-Z_s^2\|^2ds]\leq  C(1+\epsilon^{-1})\E\E^m[\Int_t^T |Y_s^1-Y_s^2|^2ds]
\end{split}
\e*
Choosing $1-\beta^2-2 \alpha-C\epsilon>0$ and from Gronwall's lemma,
$$\E\E^m[|Y_t^1-Y_t^2|^2] = 0 \,\,,\quad \E\E^m[\Int_0^T \|Z_s^1-Z_s^2\|^2ds] =0,\,\,\,\, 0\leq t\leq T.$$

\appendix

\section{A priori estimates}
\label{A priori estimates}
In this section, we provide a priori estimates which are uniform in $n$ on the solutions of (\ref{BDSDEpen}).
\begin{Lemma}\label{estapriori}
There exists a constant $C>0$, independent of $n$, such that for all $n$ large enough
\be
\underset{n}{\Sup}\,\E\E^m\big[\underset{0\leq t\leq T}{\Sup}|Y_t^n|^2+\Int_t^T \|Z_s^n\|^2 ds + \|K^n\|_{VT}\big]\leq C\Big[\|\Phi(B_{T})\|_{2}^2+\Int_t^T \big(\|f_s^0\|_{2,2}^2+\|h_s^0\|_{2,2}^2+\|g_s^0\|_{2,2}^2\big)ds\Big].\nonumber\\
\quad
\label{estunif}
\ee
\end{Lemma}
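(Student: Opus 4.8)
The plan is to run a standard energy estimate on the penalized backward doubly SDE \eqref{BDSDEpen}, the only genuinely new feature being the bounded-variation term $K^n$, which I will control through the projection inequality \eqref{prop3}. First I would apply the extended double stochastic It\^o formula of Matoussi and Stoica (\cite{MS10}, Corollary 1 and Remark 2) to $|Y^n_t|^2$ on $[t,T]$, exactly as for $|Y^1_t-Y^2_t|^2$ in \eqref{itouni}; recalling that $dK^n_s=-n\,(Y^n_s-\pi(Y^n_s))\,ds$ this yields
\b*
|Y_t^n|^2+\Int_t^T\|Z_s^n\|^2ds&=&|\Phi(B_T)|^2+2\Int_t^T(Y_s^n)^\top f\,ds+2\Int_t^T(Y_s^n)^\top h\,d\W_s-2\Int_t^T(Y_s^n)^\top Z_s^n\,dB_s\\
&&+\Int_t^T(Y_s^n)^\top g\ast dB_s-2\Int_t^T\mathrm{trace}[(Z_s^n)^\top g]\,ds+\Int_t^T\|h\|^2ds+2\Int_t^T(Y_s^n)^\top dK_s^n,
\e*
with the arguments $(s,B_s,Y^n_s,Z^n_s)$ of $f,g,h$ suppressed. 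The decisive observation is that, by \eqref{prop3}, $2\Int_t^T(Y_s^n)^\top dK_s^n=-2n\Int_t^T(Y_s^n)^\top(Y_s^n-\pi(Y_s^n))\,ds\leq -2\gamma\,\|K^n\|_{VT}\leq 0$, so this reflection term has a favourable sign and, moreover, reproduces the total variation of $K^n$ on the left-hand side.

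For the $\sup_t|Y^n|^2$ and $\int\|Z^n\|^2$ bounds I would discard the nonpositive reflection term, take $\E\E^m$, and use that under $\P\otimes\P^m$ the integrals against $d\W$, $dB$ and the forward--backward integral $\ast dB$ all have null expectation. The boundedness part of Assumption \ref{assgener}(i), namely $|f|\le f^0$, $\|h\|\le h^0$, $\|g\|\le g^0$, together with $2ab\le\varepsilon a^2+\varepsilon^{-1}b^2$, lets me estimate $2(Y^n_s)^\top f\le |Y^n_s|^2+|f^0|^2$ and $-2\,\mathrm{trace}[(Z^n_s)^\top g]\le\tfrac12\|Z^n_s\|^2+2\|g^0\|^2$, the half power of $\|Z^n_s\|^2$ being absorbed on the left. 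A backward Gronwall argument applied to $t\mapsto\E\E^m[|Y^n_t|^2]$ then gives the announced bound, and feeding it back controls $\E\E^m[\int_0^T\|Z^n_s\|^2ds]$. To pass to $\E\E^m[\sup_t|Y^n_t|^2]$ I would take the supremum in the It\^o identity and apply Burkholder--Davis--Gundy to the three stochastic integrals, e.g. $\E\E^m[\sup_t|\int_t^T(Y^n_s)^\top h\,d\W_s|]\le C\E\E^m[(\int_0^T|Y^n_s|^2\|h^0\|^2ds)^{1/2}]\le\tfrac14\E\E^m[\sup_s|Y^n_s|^2]+C\E\E^m[\int_0^T\|h^0\|^2ds]$, and likewise for the $dB$ and $\ast dB$ terms, absorbing the $\sup$ contributions on the left. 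Finally the bound on $\|K^n\|_{VT}$ comes from taking $t=0$ in the It\^o identity, moving $2\gamma\|K^n\|_{VT}$ to the left via \eqref{prop3}, taking expectation (killing the $d\W$, $dB$ and $\ast dB$ terms) and using the controls on $|Y^n|$ and $\int\|Z^n\|^2$ already obtained.

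I expect the main obstacle to be the careful treatment of the forward--backward integral term $\int(Y^n)^\top g\ast dB$ and its companion $\int\mathrm{trace}[(Z^n)^\top g]\,ds$ arising from the divergence (quasilinear) part: I must invoke the Matoussi--Stoica calculus to justify both the It\^o identity above and the vanishing of the expectation of $\ast dB$ under $\P\otimes\P^m$, and I need only the boundedness $\|g\|\le g^0$ (rather than the contraction $\alpha+\beta^2/2<1/2$, which is really what is needed later for the Cauchy and uniqueness estimates) to absorb the $\|Z^n\|^2$ coming from the trace term. A secondary but routine point is the identification of the spatial norms: since under $\P^m$ the canonical process is Brownian motion started from Lebesgue measure, $\E^m[\int_0^T|f^0(s,B_s)|^2ds]=\|f^0\|_{2,2}^2$ and similarly for $h^0,g^0$ and $\Phi$, which converts the probabilistic bounds into the right-hand side of \eqref{estunif}.
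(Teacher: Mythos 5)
Your proposal is correct and follows essentially the same route as the paper's proof: the double stochastic It\^o formula applied to $|Y^n_t|^2$, null expectation of the $d\W$, $dB$ and $\ast dB$ integrals under $\P\otimes\P^m$, Young's inequality with $\|g\|\le g^0$ to absorb half of $\int\|Z^n\|^2$, Gronwall plus Burkholder--Davis--Gundy for the supremum, and the projection inequality \eqref{prop3} at $t=0$ for the total variation of $K^n$. The only cosmetic difference is that you invoke \eqref{prop3} from the outset to keep $2\gamma\|K^n\|_{VT}$ on the left, whereas the paper first discards the penalization term via \eqref{prop1} (with $x'=0\in D$) and returns to \eqref{prop3} in a separate step for the $K^n$ bound; the two are equivalent.
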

\begin{proof}
We apply generalized It\^o's formula to get
\be \label{estimationuniforme}
\begin{split}
|Y_t^n|^2&+\Int_t^T \|Z_s^n\|^2ds  = |\Phi(B_{T})|^2 + 2\Int_t^T (Y_s^n)^\top f(s,B_s,Y_s^n,Z_s^n)ds +2\Int_t^T (Y_s^n)^\top h(s,B_s,Y_s^n,Z_s^n)d\W_s \\
& - 2\Int_t^T (Y_s^n)Z_s^ndB_s +\Int_t^T(Y_s^n)^\top g(s,B_s,Y_s^n,Z_s^n)\ast dB_s-2\Int_t^T trace[(Z_s^n)^\top g(s,B_s,Y_s^n,Z_s^n)]ds\\
& +\Int_t^T \|h(s,B_s,Y_s^n,Z_s^n)\|^2ds- 2n\Int_t^T (Y_s^n)^\top (Y_s^n-\pi(Y_s^n))ds.
\end{split}
\ee
The stochastic integrals have both zero expectations under $\P\otimes\P^m$ since $(Y^n,Z^n)$ belongs to ${\mathcal S}^2_k([0,T])\times{\mathcal H}^2_{k\times d}([0,T])$. We take expectation in (\ref{estimationuniforme}) and we use conditions (\ref{prop1}) and the Lipschitz Assumption \ref{assgener} in order to obtain
\begin{align}\label{estuniform}
\begin{split}
\E\E^m[|Y_t^n|^2]&+\E\E^m[\Int_t^T \|Z_s^n\|^2ds]\leq\E\E^m[|\Phi(B_{T})|^2]+2C\E\E^m[\Int_t^T |Y_s^n||f^0(s,B_s)|ds]\\
&+2\E\E^m\big[\Int_t^T trace[(Z_s^n)^\top g(s,B_s,Y_s^n,Z_s^n)]ds\big]+\E\E^m[\Int_t^T\|h^0(s,B_s)\|^2ds] .
\end{split}
\end{align}
Taking into account the assumptions on $g$ and the inequality $2ab\leq \epsilon a^2+ \epsilon^{-1} b^2$, for all $\epsilon >0$, we have
\begin{align}\label{esttrace}
\begin{split}
2\Int_t^T trace[&(Z_s^n)^\top(g(s,B_s,Y_s^n,Z_s^n))]ds \leq 2\Int_t^T \|Z_s^n\|\|g^0(s,B_s)\|ds\\
& \leq \epsilon^{-1}\Int_t^T \|g^0(s,B_s)\|^2ds + \epsilon\Int_t^T\|Z_s^n\|^2ds
\end{split}
\end{align}
Plugging estimate \eqref{esttrace} in \eqref{estunif} yields to
\begin{align}\label{estunifor}
\begin{split}
\E\E^m[|Y_t^n|^2]&+\Frac{1}{2}\E\E^m[\Int_t^T \|Z_s^n\|^2ds]
\leq C[\|\Phi(B_{T})\|_{2}^2+\Int_t^T \big(\|f_s^0\|_{2,2}^2+\|h_s^0\|_{2,2}^2+\|g_s^0\|_{2,2}^2\big)ds].
\end{split}
\end{align}
Then, it follows from Gronwall's lemma that
$$\underset{0\leq t\leq T}{\Sup}\E[|Y_t^n|^2]\leq C(\|\Phi(B_{T})\|_{2}^2+\Int_0^T \big(\|f_s^0\|_{2,2}^2+\|h_s^0\|_{2,2}^2+\|g_s^0\|_{2,2}^2\big)ds.$$
Therefore we can deduce

$$\E\E^m[\Int_0^T \|Z_s^n\|^2ds]\leq C(\|\Phi(B_{T})\|_{2}^2+\Int_0^T \big(\|f_s^0\|_{2,2}^2+\|h_s^0\|_{2,2}^2+\|g_s^0\|_{2,2}^2\big)ds.$$

On the other hand, the uniform estimate on $Y^n$ is obtained by taking the supremum over $t$ in the equation (\ref{estimationuniforme}), using the previous calculations and Burkholder-Davis-Gundy inequality. Thus, we get for all $n\geq 0$
\b*
\E\E^m[\underset{0\leq t\leq T}{\Sup}|Y_t^n|^2]\leq C.
\e*
Finally, the total variation of the process $K^n$ is given by
\b*
\|K^n\|_{VT}= n\Int_0^T |Y_s^n-\pi(Y_s^n)|ds.
\e*
But from the property (\ref{prop3}) and the equation (\ref{estimationuniforme}) we have
\begin{eqnarray}\label{variation}
&&2n\Int_t^T \gamma |Y_s^n-\pi(Y_s^n)|ds \leq  2n\Int_t^T |(Y_s^n)^\top (Y_s^n-\pi(Y_s^n))|ds\nonumber\\
&\leq& |\Phi(B_{T})|^2 + 2\Int_t^T (Y_s^n)^\top f(s,B_s,Y_s^n,Z_s^n)ds +2\Int_t^T (Y_s^n)^\top h(s,B_s,Y_s^n,Z_s^n)d\W_s\nonumber \\
&-& 2\Int_t^T (Y_s^n)Z_s^ndB_s +\Int_t^T(Y_s^n)^\top g(s,B_s,Y_s^n,Z_s^n)\ast dB_s\nonumber\\
&-&2\Int_t^T trace[(Z_s^n)^\top g(s,B_s,Y_s^n,Z_s^n)]ds+\Int_t^T \|h(s,B_s,Y_s^n,Z_s^n)\|^2ds
\end{eqnarray}
Hence it follows from previous estimates that
\b*
\E\E^m[ \|K^n\|_{VT}]\leq C(\|\Phi(B_{T})\|_{2}^2+\Int_0^T \big(\|f_s^0\|_{2,2}^2+\|h_s^0\|_{2,2}^2+\|g_s^0\|_{2,2}^2\big)ds,
\e*
and the proof of Lemma \ref{estapriori} is complete.\ep
\end{proof}
\vskip 0.3cm
\noindent Now by taking expectation with respect to $\E^m$ in (\ref{variation}) and then squaring the resulting inequality we can also show that following result.
\begin{Lemma}
\begin{eqnarray} \label{estimateL4-1}
\underset{n}{\Sup}\,\E\big[\left (n\int_0^T\int_{\R^d}|u_n(t,x)-\pi(u_n(t,x))|dtdx\right )^2\big]
  &<& \infty.
\end{eqnarray}
\end{Lemma}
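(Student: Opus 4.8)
The plan is to integrate the pathwise estimate \eqref{variation} against $\P^m$ first, and only afterwards square it and take the $\P$-expectation. The order is essential: $\P^m$ is built from Lebesgue measure and is only $\sigma$-finite, so Jensen's inequality is unavailable and one must instead exploit the flow invariance $\E^m[\phi(s,B_s)]=\Int_{\R^d}\phi(s,x)\,dx$, valid because $B_{0,s}(x)=x+(B_s-B_0)$ preserves Lebesgue measure. Applied to $\phi(s,x)=|u^n(s,x)-\pi(u^n(s,x))|$ and combined with $Y_s^n=u^n(s,B_s)$ and $\|K^n\|_{VT}=n\Int_0^T|Y_s^n-\pi(Y_s^n)|\,ds$, this gives the key identity
\[
n\Int_0^T\Int_{\R^d}|u_n(t,x)-\pi(u_n(t,x))|\,dt\,dx=\E^m\big[\|K^n\|_{VT}\big],
\]
so that the target quantity is $\E\big[(\E^m[\|K^n\|_{VT}])^2\big]$.

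First I would take $\E^m$ on both sides of \eqref{variation} with $t=0$. The forward It\^o integral $\Int_0^T (Y_s^n)Z_s^n\,dB_s$ has zero $\E^m$-mean by the martingale property, and the forward--backward integral $\Int_0^T (Y_s^n)^\top g\ast dB_s$ also vanishes under $\E^m$: by \eqref{forward-backward} it equals a Lebesgue integral of a divergence, whose spatial integral $\Int_{\R^d}div(\cdot)\,dx$ is zero. Using the identity above on the left, one is left with a bound of $2\gamma\,\E^m[\|K^n\|_{VT}]$ by five surviving terms, namely $\E^m[|\Phi(B_T)|^2]$, the drift term $2\E^m[\Int_0^T (Y_s^n)^\top f\,ds]$, the backward integral $2\E^m[\Int_0^T (Y_s^n)^\top h\,d\W_s]$, the cross term $-2\E^m[\Int_0^T trace[(Z_s^n)^\top g]\,ds]$, and $\E^m[\Int_0^T\|h\|^2\,ds]$.

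Then I would square this inequality, use $(\sum_{i=1}^5 a_i)^2\le 5\sum_{i=1}^5 a_i^2$, and take $\E$, bounding the contributions uniformly in $n$. The boundary contribution is deterministic, $\E^m[|\Phi(B_T)|^2]=\|\Phi\|_2^2$, so it yields $\|\Phi\|_2^4$; the last term is controlled by $\|h^0\|_{2,2}^4$ since $|h|\le h^0$ and $\E^m[|h^0(s,B_s)|^2]=\|h_s^0\|_2^2$. For the $f$ and $g$ terms, Cauchy--Schwarz inside $\E^m$ together with $\E^m[|f^0(s,B_s)|^2]=\|f_s^0\|_2^2$ (and the analogue for $g^0$) reduce matters to $C\|f^0\|_{2,2}^2\,\E\E^m[\sup_{0\le t\le T}|Y_t^n|^2]$ and $C\|g^0\|_{2,2}^2\,\E\E^m[\Int_0^T\|Z_s^n\|^2\,ds]$, both finite and bounded uniformly in $n$ by the a priori estimate \eqref{estunif} of Lemma \ref{estapriori}.

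The main obstacle is the backward $\W$-integral term, where the $\sigma$-finiteness of $\P^m$ again forbids pulling the square through $\E^m$. Here I would use a stochastic Fubini argument, justified by the independence of $W$ and $B$, to write $\E^m[\Int_0^T (Y_s^n)^\top h\,d\W_s]=\Int_0^T \E^m[(Y_s^n)^\top h]\,d\W_s$, the integrand being $\Fc_{s,T}^W$-measurable; the It\^o isometry then gives $\E\big[(\E^m[\Int_0^T (Y_s^n)^\top h\,d\W_s])^2\big]=\E\big[\Int_0^T|\E^m[(Y_s^n)^\top h]|^2\,ds\big]$, and Cauchy--Schwarz in $\E^m$ bounds the integrand by $\E^m[|Y_s^n|^2]\,\|h_s^0\|_2^2$, whence $\le\Int_0^T\E\E^m[|Y_s^n|^2]\,\|h_s^0\|_2^2\,ds\le C\|h^0\|_{2,2}^2$ by \eqref{estunif}. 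Verifying the stochastic Fubini interchange for the backward integral, and the vanishing of the $\ast\,dB$ integral under $\E^m$ via the divergence identity \eqref{forward-backward}, are the two points requiring the most care; combining all five bounds yields $\sup_n\E[(n\Int_0^T\Int_{\R^d}|u_n-\pi(u_n)|\,dt\,dx)^2]<\infty$.
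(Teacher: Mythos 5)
Your proposal takes essentially the same route as the paper: the paper's own (one-sentence) proof of \eqref{estimateL4-1} is precisely to take the expectation $\E^m$ in the pathwise inequality \eqref{variation} first and only then square and take $\E$, which is exactly your plan, with the surviving terms bounded uniformly in $n$ by the a priori estimates \eqref{estunif} of Lemma \ref{estapriori}. Your explicit points of care --- the flow-invariance identity turning the spatial integral into $\E^m$, the stochastic Fubini plus It\^o isometry for the backward $d\W$ term, and the vanishing of the $B$-integrals under $\E^m$ --- correctly fill in details that the paper leaves implicit.
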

\noindent Next, following the calculations and the estimates done before we can also prove the  $L^4$ estimate for the solutions of (\ref{BDSDEpen}).
\begin{Lemma} There exists a constant $C>0$, independent of $n$, such that for all $n$ large enough
\begin{eqnarray} \label{estimateL4}
\underset{n}{\Sup}\,\E\E^m[\underset{0\leq t\leq t}{\Sup}|Y_t^n|^4+\Big(\Int_0^T\|Z_s^n  \|^2ds\Big)^2]
  &\leq& C \E\E^m\Big[|\Phi(B_{T})|^4+\Int_0^T (|f_s^0|^4+ |h_s^0|^4+ |g_s^0|^4)ds \Big]<\infty.\nonumber\\
  &&
 \quad
\end{eqnarray}
\end{Lemma}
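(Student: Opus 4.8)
The plan is to reproduce the scheme of the $L^2$ a priori estimate in Lemma \ref{estapriori}, but at the level of the fourth power, establishing the two bounds of \reff{estimateL4} --- on $\underset{0\leq t\leq T}{\Sup}|Y^n_t|^4$ and on $\Big(\Int_0^T\|Z^n_s\|^2ds\Big)^2$ --- simultaneously, since the martingale terms couple them. Only the boundedness part of Assumption \ref{assgener}(i) and the projection property \reff{prop1} are needed; the Lipschitz and contraction conditions play no role. First I would apply the extended double stochastic It\^o formula of Matoussi and Stoica to $\Psi(Y^n_t)=|Y^n_t|^4$, where $\nabla\Psi(y)=4|y|^2y$ and $\mathrm{Hess}\,\Psi(y)=4|y|^2 I+8\,y\otimes y$, obtaining the same collection of terms as in \reff{estimationuniforme} with these replacing the gradient and Hessian of $|\cdot|^2$.

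The divergence term is treated exactly as in the derivations of \reff{2.5} and \reff{2.7}: the chain rule turns it into $-\sum_l\Int_t^T\langle \mathrm{Hess}\,\Psi(Y^n_s)Z^{n,\cdot l}_s, g^{\cdot l}_s\rangle ds$, and Cauchy--Schwarz for the positive semidefinite form $\mathrm{Hess}\,\Psi(Y^n_s)$ bounds it by $\tfrac14\Int_t^T\mathrm{trace}[Z^n_sZ^{n\top}_s\mathrm{Hess}\,\Psi(Y^n_s)]ds+C\Int_t^T\mathrm{trace}[gg^\top\mathrm{Hess}\,\Psi(Y^n_s)]ds$, with $\mathrm{trace}[gg^\top\mathrm{Hess}\,\Psi(Y^n)]\le C|Y^n|^2\|g^0\|^2$ since $\mathrm{Hess}\,\Psi(y)\le 12|y|^2 I$. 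The reflection term equals $-4n\Int_t^T|Y^n_s|^2(Y^n_s)^\top(Y^n_s-\pi(Y^n_s))ds\le 0$, because $0\in D$ and \reff{prop1} with $x'=0$ give $(Y^n_s)^\top(Y^n_s-\pi(Y^n_s))\ge0$; thus it may be dropped. Taking expectation under $\P\otimes\P^m$ (where the $dB$, $d\W$ and forward--backward $\ast dB$ integrals vanish, after a routine localization to secure integrability at the $L^4$ level), retaining $\mathrm{trace}[Z^nZ^{n\top}\mathrm{Hess}\,\Psi(Y^n)]\ge 4|Y^n|^2\|Z^n\|^2$, and estimating the drift by $4|Y^n|^3|f^0|\le 3|Y^n|^4+|f^0|^4$ and $\mathrm{trace}[hh^\top\mathrm{Hess}\,\Psi(Y^n)]\le C(|Y^n|^4+\|h^0\|^4)$, yields
\begin{align*}
\E\E^m[|Y^n_t|^4]+\E\E^m\Big[\Int_t^T|Y^n_s|^2\|Z^n_s\|^2ds\Big]
&\le C\,\E\E^m\Big[|\Phi(B_T)|^4+\Int_0^T(|f^0_s|^4+|h^0_s|^4+|g^0_s|^4)ds\Big]\\
&\quad+C\Int_t^T\E\E^m[|Y^n_s|^4]ds,
\end{align*}
whence Gronwall's lemma gives $\underset{0\leq t\leq T}{\Sup}\E\E^m[|Y^n_t|^4]\le C$ uniformly in $n$, and along the way controls $\E\E^m[\Int_0^T|Y^n_s|^2\|Z^n_s\|^2ds]$.

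For the supremum bound I would take $\underset{0\leq t\leq T}{\Sup}$ in the It\^o identity and apply Burkholder--Davis--Gundy: the $dB$-martingale is dominated by $C\,\E\E^m\big[\big(\Int_0^T|Y^n_s|^6\|Z^n_s\|^2ds\big)^{1/2}\big]\le\tfrac14\E\E^m[\underset{0\leq t\leq T}{\Sup}|Y^n_t|^4]+C\,\E\E^m[\Int_0^T|Y^n_s|^2\|Z^n_s\|^2ds]$, the last term being already bounded, and the $d\W$ and $\ast dB$ martingales are handled the same way; absorbing the $\tfrac14$-term on the left yields $\E\E^m[\underset{0\leq t\leq T}{\Sup}|Y^n_t|^4]\le C$. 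For the quadratic $Z$-bound I would instead start from the $L^2$ It\^o formula \reff{estimationuniforme} at $t=0$, drop the reflection term by \reff{prop1}, absorb $2\,\mathrm{trace}[(Z^n)^\top g]\le\tfrac12\|Z^n\|^2+2\|g^0\|^2$, square the inequality, take expectation, and bound the squared martingales by BDG (e.g. $\E\E^m[(\Int_0^T(Y^n)^\top Z^n dB)^2]\le\E\E^m[\Int_0^T|Y^n|^2\|Z^n\|^2ds]$); the previously obtained bounds on $\underset{0\leq t\leq T}{\Sup}|Y^n|^4$ and $\Int_0^T|Y^n|^2\|Z^n\|^2ds$ then close the estimate.

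The main obstacle I anticipate is the bookkeeping that links the two bounds: after BDG both $\underset{0\leq t\leq T}{\Sup}|Y^n|^4$ and the second moment of $\Int_0^T\|Z^n\|^2ds$ reappear on the right, so the Young and BDG constants (the $\tfrac14$ above) must be chosen so that these terms can be absorbed into the left with strictly positive coefficients. A second delicate point is the second moment of the forward--backward integral $\Int(Y^n)^\top g\ast dB$, which is not a martingale in the usual sense; one must use its definition and the $\ast dB$ estimates from \cite{MS10} to bound it by $C\,\E\E^m[\Int_0^T|Y^n|^2\|g^0\|^2ds]$, after which it is absorbed exactly like the genuine martingale contributions.
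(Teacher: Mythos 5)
Your proposal is correct and follows exactly the route the paper intends: the paper omits a detailed proof, saying only that the $L^4$ bound follows ``by the calculations and the estimates done before,'' i.e.\ by repeating the scheme of Lemma \ref{estapriori} and the fourth-power It\^o computation of Lemma \ref{fundamental:lemma}, which is precisely what you do (It\^o for $|Y^n_t|^4$ with $\mathrm{Hess}\,\Psi(y)=4|y|^2I+8\,y\otimes y$, dropping the penalization term via \reff{prop1} with $x'=0$, the Cauchy--Schwarz treatment of the divergence term as in \reff{2.5}--\reff{2.7}, Gronwall, BDG for the supremum, and squaring the $L^2$ identity \reff{estimationuniforme} for the $\big(\Int_0^T\|Z^n_s\|^2ds\big)^2$ bound). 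Your flagged delicate points (absorbing the BDG/Young constants and the second moment of the forward--backward $\ast\,dB$ integral) are handled the same way in the paper's analogous estimates, e.g.\ \reff{2.9} and Appendix B.
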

\section{ Proof of Lemma 3.4.}
\label{Proof of Lemma3.4.}
Let first recall that $(Y^n,Z^n)$ is solution of the BDSDE \eqref{BDSDEpen} associated to $(\Phi(B_{T}),f^n,g,h)$ where $f^n(s,y,z)=f(s,y,z)-n(y-\pi(y))$, for each $(y,z)\in\R^k\times\R^{k\times d}$. Note that , since $0\in D$, $f_s^n(0,0)=f_s(0,0):=f_s^0$.
Now, we apply generalized It\^o's formula to get
\begin{align}\label{estimat}
|Y_t^n|^2+\Int_t^T &\|Z_s^n\|^2ds+ 2n\Int_t^T (Y_s^n)^\top(Y_s^n-\pi(Y_s^n))ds= |\Phi(B_{T})|^2 + 2\Int_t^T (Y_s^n)^\top f(s,B_s,Y_s^n,Z_s^n)ds\nonumber\\
&+2\Int_t^T (Y_s^n)^\top h(s,B_s,Y_s^n,Z_s^n)d\W_s
 -2\Int_t^T(Y_s^n)^\top Z_s^n dB_s+\Int_t^T\|h(s,B_s,Y_s^n,Z_s^n)\|^2ds\nonumber\\
 &-\Int_t^T (Y_s^n)^\top g(s,B_s,Y_s^n,Z_s^n)\ast dB_s-2\Int_t^Ttrace[(Z^n_s)^\top g(s,B_s,Y^n_s,Z^n_s)]ds
\end{align}
We apply the property (\ref{prop3}) to obtain
\begin{align*}
\begin{split}
2n\Int_0^T \gamma |Y_s^n-\pi(Y_s^n)|ds &\leq  2n\Int_0^T |(Y_s^n)^\top(Y_s^n-\pi(Y_s^n))|ds\\
&\leq  |\Phi(B_{T})|^2 + 2\Int_0^T (Y_s^n)^\top f(s,B_s,Y_s^n,Z_s^n)ds+2\Int_0^T (Y_s^n)^\top h(s,B_s,Y_s^n,Z_s^n)d\W_s\nonumber\\
 &- 2\Int_0^T(Y_s^n)^\top Z_s^n dB_s+\Int_0^T\|h(s,B_s,Y_s^n,Z_s^n)\|^2ds\nonumber\\
 &+\Int_t^T (Y_s^n)^\top g(s,B_s,Y_s^n,Z_s^n)\ast dB_s-2\Int_t^Ttrace[(Z^n_s)^\top g(s,B_s,Y^n_s,Z^n_s)]ds
 \end{split}
\end{align*}
Then, taking the square and the expectation yields
\begin{align*}
\begin{split}
\E\E^m\Big[\Big(n\Int_0^T &|Y_s^n-\pi(Y_s^n)|ds\Big)^2\Big]
\leq C \E\E^m\Big[|\Phi(B_{T})|^4\Big] + C\E\E^m\Big[\Big(\Int_0^T (Y_s^n)^\top f(s,B_s,Y_s^n,Z_s^n)ds\Big)^2\Big]\nonumber\\
&+ C\E\E^m\Big[\Big(\Int_0^T (Y_s^n)^\top h(s,B_s,Y_s^n,Z_s^n)d\W_s\Big)^2\Big]
+ C\E\E^m\Big[\Big(\Int_0^T(Y_s^n)^\top Z_s^n dB_s\Big)^2\Big]\nonumber\\
&+C\E\E^m\Big[\Big(\Int_0^T\|h(s,B_s,Y_s^n,Z_s^n)\|^2ds\Big)^2\Big]+C\E\E^m\Big[\Big(\Int_t^T (Y_s^n)^\top g(s,B_s,Y_s^n,Z_s^n)\ast d\B_s\Big)^2\Big]\nonumber\\
&+C\E\E^m\Big[\Big(\Int_t^Ttrace[(Z^n_s)^\top g(s,B_s,Y^n_s,Z^n_s)]ds\Big)^2\Big].
 \end{split}
\end{align*}
By using the isometry property and the boundedness of $f$, $g$ and $h$, we obtain
\begin{align*}
\begin{split}
\E\E^m\Big[\Big(n&\Int_0^T |Y_s^n-\pi(Y_s^n)|ds\Big)^2\Big]
\leq C \E\E^m\Big[|\Phi(B_{T})|^4\Big] + C\E\E^m\Big[\Big(\Int_0^T Y_s^n ds\Big)^2\Big]\nonumber\\
&+ C\E\E^m\Big[\Int_0^T |Y_s^n h(s,B_s,Y_s^n,Z_s^n)|^2ds\Big]+C\E\E^m\Big[\Int_0^T|Y_s^n Z_s^n|^2ds\Big]\nonumber\\
&+C\E\E^m\Big[\Int_0^T |Y_s^n g(s,B_s,Y_s^n,Z_s^n)|^2ds\Big]+C\E\E^m\Big[\Big(\Int_t^Ttrace[(Z^n_s)^\top g(s,B_s,Y^n_s,Z^n_s)]ds\Big)^2\Big]+C.
 \end{split}
\end{align*}
Finally, we deduce from Holder inequality and boundedness of $h$ that
\begin{align*}
\begin{split}
\E\E^m\Big[\Big(n\Int_0^T |Y_s^n-\pi(Y_s^n)|&ds\Big)^2\Big]
\leq C \E\E^m\Big[|\Phi(B_{T})|^4 +\Int_0^T |Y_s^n|^2 ds+\underset{0\leq t\leq t}{\Sup}|Y_t^n|^4+\Big(\Int_0^T\|Z_s^n  \|^2ds\Big)^2\Big]+C.
 \end{split}
\end{align*}
Thus, from  the estimate \eqref{estimateL4} we get the desired result.
\ep
\section{Useful results}
\label{useful results}
\noindent We denote by $J(B_{t,s}^{-1}(x))$ the determinant of the Jacobian
matrix of $B_{t,s}^{-1}(x)$, which is positive and in our particular context 
$J(B_{t,s}^{-1}(x))=1$. For $\varphi\in C_c^{\infty}(\R^d)$, we define
a process $\varphi_t:\,\Omega\times [t,T]\times \R^d\rightarrow \R$ by
\begin{equation}
\label{random:testfunction conv}
\varphi_t(s,x):=\varphi(B_{t,s}^{-1}(x))J(B_{t,s}^{-1}(x)).
\end{equation}

By a change of variable formula, we have for  all $v\in \mathbf{L}^2(\R^d)$,   $$(v\circ B_{t,s} (\cdot),
\varphi)=\Int_{\R^d}v(B_{t,s}
(x))\varphi(x)dx=\Int_{\R^d}v(y)\varphi(B_{t,s}^{-1}(y))J(B_{t,s}^{-1}(y))dy
=(v,\varphi_t(s,\cdot)), \; \, \P-a.s.$$ 
Since  $ (\varphi_t(s,x))_{ t\leq s}$ is a
process,  we may not use it directly as a test function because 
$\Int_t^T(u(s,\cdot),\partial_s\varphi_t(s,\cdot))ds$ has no sense. However
$\varphi_t(s,x)$ is a semimartingale and we have the following
decomposition of $\varphi_t(s,x)$ where the proof can be found  in \cite{BM} (proof of Lemma 2.1. p.135), see also Kunita \cite{K}, \cite{K1} for the use of such random test functions.
\begin{Lemma}\label{decomposition conv}
For every function $\varphi\in C_c^{\infty}(\R^d),$
\begin{equation}\label{decomp conv}\begin{array}{ll}
\varphi_t(s,x)&=\varphi(x)+\frac{1}{2}\displaystyle\Int_t^s\Delta\varphi_t(r,x)dr-\sum_{j=1}^{d}\Int_t^s\left(\sum_{i=1}^{d}\frac{\partial}{\partial
x_i}(\varphi_t(r,x))\right)dB_r^j,
 \end{array} \end{equation}

\end{Lemma}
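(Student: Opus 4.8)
The plan is to apply the forward It\^o formula to $\varphi$ composed with the inverse flow, taking advantage of the fact that in the present setting this flow is an explicit translation. By \eqref{inverse:flow} we have $B_{t,s}^{-1}(x)=x-(B_s-B_t)$, and, as recalled at the beginning of this section, the Jacobian satisfies $J(B_{t,s}^{-1}(x))=1$; hence the random test function reduces to
\begin{equation}
\varphi_t(s,x)=\varphi\big(x-(B_s-B_t)\big),\qquad t\leq s\leq T.
\end{equation}
First I would fix $x\in\R^d$ and view $s\mapsto x-(B_s-B_t)$ as a continuous $\R^d$-valued semimartingale whose martingale part is $-(B_s-B_t)$ and whose brackets are $\langle B^i,B^j\rangle_r=\delta_{ij}\,r$, since $B$ is a standard $d$-dimensional Brownian motion.

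Applying It\^o's formula to $\varphi\in C_c^\infty(\R^d)$ along this semimartingale produces a martingale part $-\sum_{i=1}^d(\partial_{x_i}\varphi)\big(x-(B_r-B_t)\big)\,dB_r^i$ and a second order part $\frac{1}{2}\sum_{i,j}(\partial^2_{x_ix_j}\varphi)\big(x-(B_r-B_t)\big)\,d\langle B^i,B^j\rangle_r=\frac{1}{2}(\Delta\varphi)\big(x-(B_r-B_t)\big)\,dr$. The key observation is that the spatial derivatives of the transported function coincide with the transported derivatives: since $\varphi_t(r,x)$ depends on $x$ only through the translation $x-(B_r-B_t)$, one has $\partial_{x_i}\varphi_t(r,x)=(\partial_{x_i}\varphi)\big(x-(B_r-B_t)\big)$ and $\Delta_x\varphi_t(r,x)=(\Delta\varphi)\big(x-(B_r-B_t)\big)$. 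Substituting these identities into the It\^o expansion, integrating from $t$ to $s$, and using $\varphi_t(t,x)=\varphi(x)$ because $B_s-B_t$ vanishes at $s=t$, yields exactly \eqref{decomp conv}.

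The computation is essentially routine because the flow is a pure translation, so the formal adjoint of the generator $\frac{1}{2}\Delta$ coincides with $\frac{1}{2}\Delta$ and the constant Jacobian contributes no additional drift term. The only point that requires care is the interchange of the spatial differentiation in $x$ with the stochastic and Lebesgue integration in $r$; this is legitimate because the smoothness and compact support of $\varphi$ make all the derivatives appearing bounded and continuous, so the integrals are well defined and differentiation under the integral sign is justified. For a general diffusion flow this is precisely the content of Lemma~2.1 in \cite{BM}, and the same argument, specialized to the case $\sigma=I$ and zero drift, gives the stated decomposition; see also Kunita \cite{K,K1} for the theory of stochastic flows of diffeomorphisms and the transport of test functions.
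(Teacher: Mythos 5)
Your proof is correct, but it takes a different (and more self-contained) route than the paper, which gives no argument of its own: the paper simply defers to Lemma 2.1 of Bally--Matoussi \cite{BM} (p.\ 135), where the decomposition is proved for random test functions transported by a general stochastic flow of diffeomorphisms, with nonconstant diffusion coefficient and a nontrivial Jacobian. You instead exploit the special structure available here after the paper's reduction to the generator $\frac{1}{2}\Delta$: the flow is the pure translation $B_{t,s}(x)=x+(B_s-B_t)$ with $J(B_{t,s}^{-1}(x))\equiv 1$, so $\varphi_t(s,x)=\varphi\big(x-(B_s-B_t)\big)$ and the decomposition follows from a direct application of It\^o's formula for fixed $x$, combined with the pathwise chain-rule identities $\partial_{x_i}\varphi_t(r,x)=(\partial_{x_i}\varphi)\big(x-(B_r-B_t)\big)$ and $\Delta_x\varphi_t(r,x)=(\Delta\varphi)\big(x-(B_r-B_t)\big)$; in fact your cautionary remark about interchanging spatial differentiation with the integrals is essentially superfluous in this setting, since these identities hold pointwise and It\^o's formula is applied pathwise for each fixed $x$. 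What your approach buys is transparency and independence from the machinery of \cite{BM,K,K1}; what the citation-based route buys is generality, since BM's lemma covers flows where the adjoint of the generator differs from the generator itself. One reconciliation worth noting: your computation yields the martingale part $-\sum_{j=1}^d\int_t^s\partial_{x_j}\varphi_t(r,x)\,dB_r^j$, whereas the lemma as displayed carries an inner sum $\sum_{i=1}^d\partial_{x_i}(\varphi_t(r,x))$ inside each $dB_r^j$ integral; the latter is a residue of specializing BM's term $\sum_i\partial_{x_i}\big(\sigma_{ij}\varphi_t\big)$, which collapses to $\partial_{x_j}\varphi_t$ when $\sigma=I$, so your single-sum expression is the correct reading and the paper's double sum is a typographical artifact rather than a discrepancy in your proof.
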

\noindent Thanks to the above lemma, we can replace $\partial_s \varphi\, ds$ by the It\^o stochastic integral with respect to $d\varphi_t(s,x)$. This allows us to give the following
\begin{Definition}
For every $s\in[t,T]$, $u\in\Hc_T$ and $ \varphi\in C_c^{\infty}(\R^d),$ we define 
\begin{equation}\label{decomp conv}\begin{array}{ll}
\Int_s^T \left(u_r,d\varphi_t(r,.)\right)&=\displaystyle\Int_s^T \frac{1}{2}\left(u_r,\Delta\varphi_t(r,.)\right)dr-\sum_{j=1}^{d}\Int_s^T\left(\sum_{i=1}^{d}\left(u_r,\frac{\partial}{\partial x_i}(\varphi_t(r,.))\right)\right)dB_r^j,
 \end{array} \end{equation}
\end{Definition} 
\vspace{0.2cm}
\noindent We give now the following result which allows us to link in a natural way the solution of RSPDE with the associated RBDSDE.
 Roughly speaking,  for each  test function  $ \varphi \in C_c^{\infty}(\R^d) $,  the variational formulation \eqref{OPDE}  written with  the random test functions $\varphi_t(\cdot,\cdot)$  gives  the markovian RBDSDE \eqref{RBDSDE}  integrated against the test function $ \varphi $ , this dictionary can  be understood as the dual formulation of the variational equation for SPDE.  Pardoux and Peng  \cite{PP} have proved the probabilistic representation of classical solution $u$  for semilinear SPDEs  via BDSDEs by using the classical Itô's formula  for  $u (s, B_{t,s} (x))$.  However, since we consider Sobolev solutions for RSPDEs, the following proposition plays the rule of Itô's formula applied to the random test function $\varphi_t(s,x)$.  
    \begin{Proposition}
\label{weak:Itoformula1 conv} Let  Assumptions \ref{assxi}-\ref{assgener} hold and $(u,\nu)$ be a weak solution of the reflected 
SPDE (\ref{RSPDE1}) associated to $(\Phi,f,h)$, 
then for $s\in[t,T]$ and $\varphi\in
C_c^{\infty}(\R^d)$, 
\begin{align}\label{wspde}
\nonumber &\Int_{s}^{T}\!\!\Int_{\mathbb{R}^{d}}\!\big[\langle u(r,x), d\varphi _t(r,x)\rangle+\Frac{1}{2}\langle\nabla u(r,x), \nabla \varphi_t(r,x)\rangle\big]dxdr+\Int_{\mathbb{R}^{d}}\!\!\big[\langle u(s,x ), \varphi_t (s,x
)\rangle-\langle\Phi(x ), \varphi_t (T,x)\rangle\big]dx\\
\nonumber &=\Int_s^{T}\!\!\Int_{\mathbb{R}^{d}}\!\!\big[\langle f(r,x ,u(r,x),\nabla u(r,x)), \varphi_t(r,x)\rangle-\langle g(r,x ,u(r,x),\nabla u(r,x)), \nabla\varphi_t(r,x)\rangle\big]dxdr\\
&+\Int_{s}^{T}\!\!\Int_{\mathbb{R}^{d}}\!\!\langle h(r,x ,u(r,x),\nabla u(r,x)), \varphi_t(r,x)\rangle dx d\W_r
+\Int_{s}^{T}\!\!\Int_{\mathbb{R}^{d}}\!\langle\varphi_t (r,x)1_{\{u\in \partial D\}}(r,x), \nu(dr,dx)\rangle.
\end{align}
where  $\Int_{\R^d}\Int_s^Tu(r,x)d\varphi_t(r,x)dx $ is well defined in the semimartingale decomposition result (Lemma \ref{decomposition conv}).
\end{Proposition}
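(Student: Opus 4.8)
The idea is to derive \eqref{wspde} from the variational identity \eqref{OPDE} by substituting the random, semimartingale-valued test function $\varphi_t(r,\cdot)$ of \eqref{random:testfunction conv} for a smooth deterministic one, the substitution being made rigorous through a time discretisation. Two structural facts drive the argument. First, in the present setting $B_{t,r}^{-1}(x)=x-(B_r-B_t)$ and $J(B_{t,r}^{-1}(x))=1$, so $\varphi_t(r,\cdot)=\varphi(\cdot-(B_r-B_t))$ is a genuine element of $C_c^\infty(\R^d)$ for every $r$, is $\mathcal{F}_{t,r}^B$-measurable, and admits the semimartingale decomposition of Lemma \ref{decomposition conv}. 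Second, the two processes entering the pairing $(u_r,\varphi_t(r,\cdot))$ have orthogonal martingale parts: $u$ carries only the backward noise $\W$, whereas $\varphi_t$ carries only the forward noise $B$, and $B$ and $W$ are independent. This orthogonality is what legitimises the substitution and guarantees that no It\^o correction survives.

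Concretely, I would fix a partition $s=r_0<r_1<\cdots<r_N=T$ of $[s,T]$ and telescope the pairing, using $u(T,\cdot)=\Phi$:
\[
(\Phi,\varphi_t(T,\cdot))-(u_s,\varphi_t(s,\cdot))=\sum_{k=0}^{N-1}\Big[(u_{r_{k+1}}-u_{r_k},\varphi_t(r_{k+1},\cdot))+(u_{r_k},\varphi_t(r_{k+1},\cdot)-\varphi_t(r_k,\cdot))\Big].
\]
To the first family of increments I apply \eqref{OPDE} on the subinterval $[r_k,r_{k+1}]$ (obtained by differencing the identity at the two lower limits) with the time-frozen test function $\varphi_t(r_{k+1},\cdot)$, for which $\partial_r\varphi\equiv 0$; this produces the contributions of $\Frac{1}{2}(\nabla u_r,\nabla\varphi_t)$, of $f$ and $g$, of the backward integral $(h_r,\varphi_t)\,d\W_r$, and of the measure term $\varphi_t\mathbf{1}_{\{u\in\partial D\}}\nu$. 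To the second family I insert the decomposition of Lemma \ref{decomposition conv}, which yields $\Frac{1}{2}(u_{r_k},\Delta\varphi_t(r,\cdot))\,dr$ together with the forward integral $-\sum_j(u_{r_k},\sum_i\partial_{x_i}\varphi_t(r,\cdot))\,dB_r^j$; by the very definition of $\Int_s^T(u_r,d\varphi_t(r,\cdot))$ these are precisely the two pieces of that term. Letting the mesh tend to zero, the frozen evaluations $\varphi_t(r_{k+1},\cdot)\to\varphi_t(r,\cdot)$ and $u_{r_k}\to u_r$ converge by the pathwise continuity of $r\mapsto\varphi_t(r,\cdot)$ and the $\mathbf{L}^2(\R^d)$-continuity of $r\mapsto u_r$ built into $\mathcal{H}_T$, so every Riemann sum converges to the corresponding integral and, after rearrangement, the terms reassemble exactly into \eqref{wspde}.

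The point I expect to be delicate, and which is the heart of the matter, is twofold. First, one must justify inserting the random test function $\varphi_t(r_{k+1},\cdot)$ into \eqref{OPDE}, which a priori holds only for deterministic test functions; since $\varphi_t(r_{k+1},\cdot)$ is $\mathcal{F}_{t,r_{k+1}}^B$-measurable while the SPDE is driven solely by $W$ with coefficients deterministic in $x$, I would condition on $B$ and invoke Fubini together with the independence $B\perp W$, transferring the a.s.\ identity from each frozen deterministic realisation to the random function; this is exactly the approximation performed in \cite{BM} (proof of Lemma 2.1, p.\ 135). Second, one must pass to the limit in the stochastic Riemann sums. The forward integrand $r\mapsto(u_r,\partial_{x_i}\varphi_t(r,\cdot))$ is $\mathcal{F}_r$-adapted (as $u_r$ is $\mathcal{F}_{r,T}^W$-measurable, $\partial_{x_i}\varphi_t(r,\cdot)$ is $\mathcal{F}_r^B$-measurable, and $B$ remains a Brownian motion in the enlarged filtration precisely because $B\perp W$), so $\Int_s^T(u_r,\dots)\,dB_r^j$ is well defined; the $L^2$-control afforded by $\E[\Sup_r\|u_r\|_2^2+\Int_0^T\|\nabla u_r\|_2^2\,dr]<\infty$, by the boundedness of $f^0,g^0,h^0$, and by $\E\Int_0^T\Int_{\R^d}|\nu|(dr,dx)<\infty$ then gives convergence of all the sums, the measure term converging by dominated convergence from the uniform convergence $\varphi_t(r_{k+1},\cdot)\to\varphi_t(r,\cdot)$ and the finite total variation of $\nu$. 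The role of the independence $B\perp W$ is decisive here: it ensures that replacing the frozen evaluations $\varphi_t(r_{k+1},\cdot)$ and $u_{r_k}$ by their current-time values inside the backward $\W$- and forward $B$-integrals produces only $L^2$-negligible errors, so that no quadratic-covariation correction between the two families of stochastic integrals survives in the limit. Collecting the surviving terms yields \eqref{wspde}.
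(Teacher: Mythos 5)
Your argument is correct, but it follows a genuinely different route from the paper's. The paper gives no self-contained proof of Proposition \ref{weak:Itoformula1 conv}: it refers to Proposition 2.3 of Bally and Matoussi \cite{BM}, whose proof rests on a Wong--Zakai approximation (Lemmas 4.1 and 4.2 there). In that approach the It\^o integral appearing in the semimartingale decomposition of $\varphi_t$ (Lemma \ref{decomposition conv}) is regularised in time, so that the mollified random test function is genuinely differentiable in $s$ and can be inserted into the variational formulation \eqref{OPDE} as it stands; one then passes to the limit, identifying the Stratonovich-type correction terms produced by the smoothing. You instead discretise time, telescope the pairing $(u_r,\varphi_t(r,\cdot))$, apply \eqref{OPDE} on each subinterval with the test function frozen at the right endpoint, and use the independence $B\perp W$ to show that the cross-variation between the $\W$-driven increments of $u$ and the $B$-driven increments of $\varphi_t$ vanishes in $L^2$ along the mesh. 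Both arguments share the same skeleton --- substitute the random test function \eqref{random:testfunction conv} into \eqref{OPDE}, which requires the conditioning/density argument you describe to pass from deterministic to $\sigma(B)$-measurable test functions, and then overcome the non-differentiability of $s\mapsto\varphi_t(s,\cdot)$ --- but the mechanisms differ: yours is a Riemann-sum/freezing argument needing only the $\mathbf{L}^2(\R^d)$-continuity of $r\mapsto u_r$ built into $\mathcal{H}_T$, the forward and backward It\^o isometries in the enlarged ($B\perp W$) setting, and dominated convergence for the measure term; the paper's imports ready-made approximation lemmas from \cite{BM}, with the advantage that the regularised test function is an honest element of $\mathcal{D}_T$ so no interval-splitting of \eqref{OPDE} is required. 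Your treatment of the reflection term is the right one and is where the present proposition genuinely extends \cite{BM}: since $\E\big[\Int_0^T\Int_{\R^d}|\nu|(dr,dx)\big]<\infty$ and $r\mapsto\varphi_t(r,\cdot)=\varphi(\cdot-(B_r-B_t))$ is pathwise uniformly continuous, the term $\Int_s^T\Int_{\R^d}\langle\varphi_t(r,x)1_{\{u\in\partial D\}}(r,x),\nu(dr,dx)\rangle$ passes to the limit harmlessly in either scheme. One point worth making explicit if you write this up: the identity \eqref{OPDE} holds a.s.\ for each fixed deterministic $\varphi$, so the simultaneous a.s.\ validity over the random family $\{\varphi_t(r_{k+1},\cdot)\}$ should be secured via a countable dense subset of $C_c^\infty(\R^d)$ and continuity of both sides in $\varphi$, exactly as in the approximation step of \cite{BM} that you cite.
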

The proof of the proposition is  the same as the proof of Proposition 2.3, p. 137 in Bally and Matoussi  \cite{BM}.  This latter is based on Lemma 4.1 p.147 and Lemma 4.2. p.148  which involve the Wong-Zakai approximation of the Itô stochastic integral appearing in the semimartingale decomposition of the random test functions  given by \eqref{random:testfunction conv}  (see \cite{IW}, chap. 6, section 7 , p.480-517). The main idea is to use  $(\varphi_t(s,x)) $ as a test function in the  \eqref{OPDE}. The problem is that $(\varphi_t(s,x)) $  is not differentiable in the time variable $s$,  so that   $ \int \int_t^T u_s \partial_s \varphi_t  (s,x) ds dx $ has no sense. However     $(\varphi_t(s,x)) $ a semimartingale and one can use  Wong-Zakai approximation (see \cite{BM},  Lemma 4.2 p.148) to handel with this point . 
\newpage

\end{document}